\theoremstyle{plain}
\newtheorem{theorem}{Theorem}[section]
\newtheorem{lemma}[theorem]{Lemma}
\newtheorem{proposition}[theorem]{Proposition}
\newtheorem{corollary}[theorem]{Corollary}
\newtheorem{conjecture}[theorem]{Conjecture}
\theoremstyle{remark}
\newtheorem{remark}[theorem]{Remark}
\newtheorem{example}[theorem]{Example}
\title{\bf On a conjecture of Kontsevich and Soibelman}  
\author{L\^e Quy Thuong}
\dedicatory{\it Dedicated to Professor H\`a Huy Vui on the occasion of his sixtieth birthday}
\address{Ecole Normale Sup\'erieure, D\'epartment de Math\'ematiques et Applications UMR 8553 CNRS, 45 rue d'Ulm, 75230 Paris cedex 05, France }
\email{Thuong.Le@ens.fr}
\address{({\it Current}) Institut de Math\'ematique de Jussieu, UMR 7586 CNRS, 4 place Jussieu, 75005 Paris, France }
\email{leqthuong@math.jussieu.fr}
\keywords{arc spaces, motivic Milnor fiber, motivic zeta function, Newton polyhedron}
\subjclass[2000]{Primary 14B05, 14B07, 14J17, 32S05, 32S30, 32S55}
\begin{document}           
\begin{abstract}
We consider a conjecture of Kontsevich and Soibelman which is regarded as a foundation of their theory of motivic Donaldson-Thomas invariants for non-commutative $3d$ Calabi-Yau varieties. We will show that, in some certain cases, the answer to this conjecture is positive.
\end{abstract}
\maketitle                 

\section{Introduction}\label{sec1}
In \cite{KS}, Kontsevich and Soibelman introduce and give discussions on the motivic Donaldson-Thomas invariants which are defined for non-commutative $3d$ Calabi-Yau varieties and take values in certain Grothendieck groups of algebraic varieties. One of the main objectives of \cite{KS} is to define the motivic Hall algebra which generates To$\ddot{\rm e}$n's notion of the derived Hall algebra (cf. \cite{To}). For $\mathcal{C}$ an ind-constructible triangulated $A_{\infty}$-category over a field $\kappa$, the motivic Hall algebra $H(\mathcal{C})$ is constructed to become a graded associative algebra, which admits for each strict sector $V$ an element $A_V^{\text{Hall}}$ invertible in the completed motivic Hall algebra and satisfying the Factorization Property. It is believed that, in the case of $3d$ Calabi-Yau category, there is a homomorphism $\Phi$ of the motivic Hall algebra into the motivic quantum torus defined in terms of the motivic Milnor fiber of the potential. Then the motivic Donaldson-Thomas invariants appear as the collection of the images of $A_V^{\text{Hall}}$ under the homomorphism $\Phi$. \\
\indent In fact, a central role in the existence of $\Phi$ is played by the following conjecture. Assume that the characteristic of $\kappa$ is zero. Let $F$ be a formal series on the affine space $\mathbb{A}_{\kappa}^d=\mathbb{A}_{\kappa}^{d_1}\times_{\kappa}\mathbb{A}_{\kappa}^{d_2}\times_{\kappa}\mathbb{A}_{\kappa}^{d_3}$, depending on a constructible way on finitely many extra parameters, such that $F(0,0,0)=0$ and $F$ has degree zero with respect to the diagonal action of the multiplicative group $\mathbb{G}_{m,\kappa}$ with the weights $(1,-1,0)$. In particular, $F(x,0,0)$ is the zero function on $\mathbb{A}_{\kappa}^{d_1}$. We denote by $X_0(F)$ the set of the zeros of $F$ on $\mathbb{A}_{\kappa}^d$. Consider the natural inclusions $i_1:\mathbb{A}_{\kappa}^{d_1}\times_{\kappa} \mathbb{G}_{m,\kappa}\rightarrow X_0(F)\times_{\kappa} \mathbb{G}_{m,\kappa}$ and $i_0: \{0\}\times_{\kappa} \mathbb{G}_{m,\kappa}\rightarrow X_0(F)\times_{\kappa} \mathbb{G}_{m,\kappa}$. Consider the motivic Milnor fiber $\mathcal{S}_F$ of $F$ in the ring $\mathscr{M}_{X_0(F)\times_{\kappa} \mathbb{G}_{m,\kappa}}^{\mathbb{G}_{m,\kappa}}$, the localisation of the relative Grothendieck ring defined in \cite{GLM1} and \cite{GLM2}. Denote by $h$ the function on $\mathbb{A}_{\kappa}^{d_3}$ defined by $h(z)=F(0,0,z)$. We write $\mathcal{S}_{h,0}$ for the pullback $i_0^*\mathcal{S}_h$. We denote by integral $\int_{\mathbb{A}_{\kappa}^{d_1}}$ the pushforward of the canonical morphism $\pi: \mathbb{A}_{\kappa}^{d_1}\times_{\kappa} \mathbb{G}_{m,\kappa}\rightarrow Spec(k)\times_{\kappa} \mathbb{G}_{m,\kappa}$. 
\begin{conjecture}[\cite{KS}]\label{conj}
With the previous notations and hypotheses, the following formula holds in $\mathscr{M}_{\mathbb{G}_{m,\kappa}}^{\mathbb{G}_{m,\kappa}}$:
$$\int_{\mathbb{A}_{\kappa}^{d_1}}i_1^*\mathcal{S}_F=\mathbb{L}^{d_1}\mathcal{S}_{h,0}.$$
\end{conjecture}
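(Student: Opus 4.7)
The plan is to compute both sides via the Denef--Loeser arc-space description of the motivic Milnor fiber, exploiting the $(1,-1,0)$ equivariance to reduce the left-hand side to a contribution depending only on $h$. The first step is purely algebraic: since every monomial $x^{\alpha}y^{\beta}z^{\gamma}$ appearing in $F$ must satisfy $|\alpha|=|\beta|$, one may write
\[
F(x,y,z)=h(z)+\sum_{i,j}x_{i}y_{j}G_{ij}(x,y,z),
\]
so that $F-h$ vanishes identically on $\{x=0\}\cup\{y=0\}$ and in particular $F(x,0,z)=F(0,y,z)=h(z)$.

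Next, I would express $\mathcal{S}_{F}$ through motivic integration on the arc space $\mathcal{L}(\mathbb{A}_{k}^{d})$. Applying $i_{1}^{*}$ amounts to restricting to arcs $\varphi=(\varphi_{1},\varphi_{2},\varphi_{3})$ with $\varphi_{2}(0)=0$, $\varphi_{3}(0)=0$ and $\varphi_{1}(0)\in\mathbb{A}_{k}^{d_{1}}$ arbitrary. The natural action
\[
\lambda\cdot(\varphi_{1},\varphi_{2},\varphi_{3})=(\lambda\varphi_{1},\lambda^{-1}\varphi_{2},\varphi_{3})
\]
of $\mathbb{G}_{m,k}$ preserves $F\circ\varphi$, hence preserves the level sets that define the motivic zeta function of $F$. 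Stratifying those level sets according to the pair $(\mathrm{ord}_{t}\varphi_{1},\mathrm{ord}_{t}\varphi_{2})$, the action is free precisely away from the loci $\{\varphi_{1}\equiv 0\}\cup\{\varphi_{2}\equiv 0\}$.

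The heart of the argument is then to show that, at each finite truncation level $n$, the contribution of the strata on which the $\mathbb{G}_{m,k}$-action is free assembles, via summation over orbits, into a class that vanishes (or cancels in a controlled way) in $\mathscr{M}^{\mathbb{G}_{m,k}}_{\mathbb{G}_{m,k}}$, while the strata with $\varphi_{2}\equiv 0$ --- together with the $\mathbb{L}^{d_{1}}$-parameter space of base points $\varphi_{1}(0)\in\mathbb{A}_{k}^{d_{1}}$ and the identity $F\circ\varphi=h\circ\varphi_{3}$ valid there --- reproduce exactly $\mathbb{L}^{d_{1}}$ copies of the level-$n$ contribution to $\mathcal{S}_{h,0}$. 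Passing to the limit $n\to\infty$ yields the claimed equality. In the split case $F=f(x,y)+h(z)$ with $f$ of weight zero, the same reduction can be obtained more cleanly via the motivic Thom--Sebastiani convolution formula, provided one verifies separately that $\int_{\mathbb{A}_{k}^{d_{1}}}i_{1}^{*}\mathcal{S}_{f}=\mathbb{L}^{d_{1}}$ in $\mathscr{M}^{\mathbb{G}_{m,k}}_{\mathbb{G}_{m,k}}$.

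The main obstacle will be to keep track of the interaction between the geometric $\mathbb{G}_{m,k}$-action coming from the weights $(1,-1,0)$ and the monodromy $\mathbb{G}_{m,k}$-action built into classes of $\mathscr{M}^{\mathbb{G}_{m,k}}_{X_{0}(F)\times_{k}\mathbb{G}_{m,k}}$: both enter the bookkeeping and their compatibility is delicate. A closely related difficulty is the analysis of the degenerate strata $\{\varphi_{2}\equiv 0\}$ on which the $G_{ij}$ may still interact nontrivially with $\varphi$; this is presumably what forces the restriction to the ``certain cases'' announced in the abstract, and the general situation would likely require an equivariant embedded resolution of $F^{-1}(0)$ adapted to $\{x=0\}\cup\{y=0\}$ before Denef--Loeser's change-of-variables formula can be applied.
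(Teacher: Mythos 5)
The statement you are addressing is stated as a conjecture, and the paper itself does not prove it in full generality: it establishes it only in special cases (Theorem~\ref{thm5.1} for $F=f(g_1(x,y),g_2(z))$ with $g_1$ nondegenerate, Proposition~\ref{prop5.4} for $F=g$ nondegenerate, Theorem~\ref{thm5.6} for $F=g+h^N$ with $N$ large). Your proposal attempts a uniform direct argument, and it is structurally quite different from the paper's route, which combines the Guibert--Loeser--Merle decomposition formulas for motivic Milnor fibers (composition with a two-variable function, Steenbrink-type formula) with a Newton-polyhedron computation (Proposition~\ref{prop4.2}, Corollary~\ref{cor4.5}) showing that $\int_{\mathbb{A}_k^{d_1}}i_1^*\mathcal{S}_{g_1}$ vanishes.

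There are genuine gaps in your sketch that would prevent it from becoming a proof even of the special cases.

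First, the claimed cancellation ``the contribution of the strata on which the $\mathbb{G}_{m,k}$-action is free assembles, via summation over orbits, into a class that vanishes'' does not follow from freeness. If $X$ carries a free $\mathbb{G}_m$-action with (nice) quotient $Y$, one gets $[X]=(\mathbb{L}-1)[Y]$, and $\mathbb{L}-1\neq 0$ in the localized Grothendieck ring; freeness alone annihilates nothing. The paper's actual mechanism is a different, combinatorial cancellation: under the weight hypothesis each compact face $\gamma$ of the Newton polyhedron has a nonempty maximal recession set $M\subset\{1,\dots,n_1\}$ (Lemma~\ref{lem4.4}), and the sum $\sum_{I\subset M}(-1)^{|I|}$ vanishes identically because $|M|\geq 1$ (Corollary~\ref{cor4.5}). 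Your proposal would have to rediscover this --- and it crucially requires a nondegeneracy hypothesis, which is why the paper cannot treat the general conjecture.

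Second, the identification of the $\{\varphi_2\equiv 0\}$ stratum with the right-hand side cannot work as stated: in the finite jet space $\mathscr{L}_m(\mathbb{A}_k^d)$ the locus $\{\varphi_2\equiv 0 \bmod t^{m+1}\}$ has codimension $(m+1)d_2$, so its normalized class is killed by the $\mathbb{L}^{-mn}$ factor in the motivic zeta function and contributes nothing to the limit $\mathcal{S}_F$. The class $\mathbb{L}^{d_1}\mathcal{S}_{h,0}$ cannot arise from a measure-zero stratum. In the paper it arises instead from the ``constant'' term of the GLM decomposition, $\mathcal{S}_{g_2^{m_0}}([X_0(g_1)])$ (or $\mathcal{S}_{h^N}([X_0(g)])$), whose pull-back produces the factor $i_1^*[X_0(g_1)]=[\mathbb{A}_k^{d_1}]=\mathbb{L}^{d_1}$; it is not a contribution localized on degenerate arcs.

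Third, your Thom--Sebastiani fallback has the expected value backwards: the reduction works because $\int_{\mathbb{A}_k^{d_1}}i_1^*\mathcal{S}_{g_1}=0$ (so the convolution correction terms die), not because $\int_{\mathbb{A}_k^{d_1}}i_1^*\mathcal{S}_{g_1}=\mathbb{L}^{d_1}$; the factor $\mathbb{L}^{d_1}$ comes from $i_1^*[X_0(g_1)]$, not from $i_1^*\mathcal{S}_{g_1}$. Finally, the two $\mathbb{G}_{m,k}$-actions you mention --- the geometric one with weights $(1,-1,0)$ and the monodromy action $a\cdot\varphi(t)=\varphi(at)$ carried by $\mathscr{M}^{\mathbb{G}_{m,k}}_{-}$ --- are indeed both present and must be kept distinct; you correctly flag this as delicate, but your sketch does not resolve the issue, and it is precisely what the arc-space bookkeeping of Lemma~\ref{lem4.1} (via $\Phi_{\gamma,I}$, $\Psi_{\gamma,I}$) is designed to handle.

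In short, the high-level intuition you extract from the weight-$(1,-1,0)$ symmetry and the decomposition $F=h(z)+\sum_{i,j}x_iy_jG_{ij}$ is sound and does drive the paper's proofs, but the mechanisms you propose (free-orbit cancellation, localization on $\{\varphi_2\equiv 0\}$) are not the ones that make the argument work, and at least the first two would fail as stated.
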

\indent In this paper, we consider the conjecture in some special (actually quite general) cases, namely, when $F$ is a composition of a polynomial in two variables and a pair of two regular functions (Theorem \ref{thm5.1}), or $F$ has the form $F(x,y,z)=g(x,y,z)+h(z)^{\ell}$ with $\ell$ sufficiently large (function of Steenbrink type, Theorem \ref{thm5.6}) under some additional conditions of nondegeneracy with respect to Newton polyhedron (this would be the general case for the conjecture if we did not assume $\ell$ sufficiently large). For these cases, we use previous results of Guibert, Loeser and Merle (\cite{GLM1}, \cite{GLM3}) for the motivic Milnor fiber of composite functions or functions of Steenbrink type. We also use in an important way, via Proposition \ref{prop4.5}, the explicit computation of the motivic Milnor fiber of a regular function via its Newton polyhedron (suggested by \cite{G}). These lead to the positive answer to the conjecture in the cases considered. \\
\indent This work was suggested by Fran\c cois Loeser, my advisor, who proposed me to consider the conjecture firstly in the case of composition $f(g_1,g_2)$ and encouraged me in each step of proof. I am deeply grateful to him for these, for his suggestions of method approaching to the solution and for his help in preparing the manuscript. I would like to thank the referee for his contributions to the paper which make it become more readable.

\section{Motivic zeta function and Motivic Milnor fiber}\label{sec2}
Let us recall some basic notations in the theory of motivic integration which will be used in this paper. For references, we follow \cite{DL1}, \cite{DL2}, \cite{DL3}, \cite{G}, \cite{GLM1} and \cite{GLM2}.
\subsection{}
Let $\kappa$ be a field of characteristic zero. For a variety $X$ over $\kappa$, we denote by $\mathscr{L}_m(X)$ the space of $m$-arcs on $X$, and by $\mathscr{L}(X)$ a limit of the projective system of spaces $\mathscr{L}_m(X)$ and (canonical) morphisms $\mathscr{L}_l(X)\rightarrow \mathscr{L}_m(X)$ ($l\geq m$). In this paper, we use the notation $\pi_m$ for the canonical morphism $\mathscr{L}(X)\to\mathscr{L}_m(X)$. The $\mathbb{G}_{m,\kappa}$-action on $\mathscr{L}_m(X)$ and $\mathscr{L}(X)$ is given by $a\cdot\varphi(t)=\varphi(at)$. For the notation $\mathscr{M}_X$, we can find in \cite{GLM1}. As in \cite{GLM2}, we denote by $\mathscr{M}_{X\times_{\kappa} \mathbb{G}_{m,\kappa}}^{\mathbb{G}_{m,\kappa}}$ the localisation at $\mathbb{L}$ of the relative Grothendieck ring of $\mathbb{G}_{m,\kappa}$-equivariant morphisms $Y\rightarrow X\times_{\kappa} \mathbb{G}_{m,\kappa}$ endowed with a monomial $\mathbb{G}_{m,\kappa}$-action, where $\mathbb{L}$ is the class of the line bundle $\mathbb{A}_{X\times_{\kappa} \mathbb{G}_{m,\kappa}}^1$.\\
\indent From now on, the group scheme $\mathbb{G}_{m,\kappa}={\rm Spec} (\kappa[t,t^{-1}])$ will be written simply as $\mathbb{G}$.

\subsection{Motivic zeta function and Motivic Milnor fiber}
Let $X$ be a smooth variety over $\kappa$ of pure dimension $n$, and let $g:X\rightarrow \mathbb{A}_{\kappa}^1$ be a function on $X$ and $X_0(g)$ the zero locus of $g$. For $m\geq 1$, we define
$$\mathscr{X}_m(g):=\{\varphi\in\mathscr{L}_m(X)\ |\ ord_tg(\varphi)=m\}.$$
Note that this variety is invariant by the $\mathbb{G}$-action on $\mathscr{L}_m(X)$. Furthermore, $g$ induces a morphism $g_m:\mathscr{X}_m(g)\rightarrow \mathbb{G}$, assigning to a point $\varphi$ in $\mathscr{L}_m(X)$ the coefficient $ac(g(\varphi))$ of $t^m$ in $g(\varphi(t))$, which we also denote by $ac(g)(\varphi)$. This morphism is a diagonally monomial of weight $m$ with respect to the $\mathbb{G}$-action on $\mathscr{X}_m(g)$ since $g(s\cdot\varphi)=s^mg_m(\varphi)$. We thus consider the class $[\mathscr{X}_m(g)]$ of $\mathscr{X}_m(g)$ in $\mathscr{M}_{X_0(g)\times_{\kappa} \mathbb{G}}^{\mathbb{G}}$. We can now consider the {\it motivic zeta function}
$$Z_g(T):=\sum_{m\geq 1}[\mathscr{X}_m(g)]\mathbb{L}^{-mn}T^m$$
in $\mathscr{M}_{X_0(g)\times_{\kappa} \mathbb{G}}^{\mathbb{G}}[[T]]$. Note that $Z_g=0$ if $g=0$ on $X$.\\
\indent By using a log-resolution of $X_0(g)$, Denef and Loeser proved in \cite{DL1} and \cite{DL3} that $Z_g(T)$ is a rational series in $\mathscr{M}_{X_0(g)\times_{\kappa} \mathbb{G}}^{\mathbb{G}}[[T]]_{sr}$ (cf. (\ref{2.3})) and they also showed that one can consider the limit $\lim_{T\rightarrow\infty}Z_g(T)$ in $\mathscr{M}_{X_0(g)\times_{\kappa} \mathbb{G}}^{\mathbb{G}}$. Then the {\it motivic Milnor fiber} of $g$ is defined as
$$\mathcal{S}_g:=-\lim_{T\rightarrow\infty}Z_g(T).$$

\subsection{Rational series and their limits}\label{2.3}
Let $A$ be one of the rings $\mathbb{Z}[\mathbb{L},\mathbb{L}^{-1}]$, $\mathbb{Z}[\mathbb{L},\mathbb{L}^{-1},(1/(1-\mathbb{L}^{-i}))_{i>0}]$, $\mathscr{M}_{S\times_{\kappa} \mathbb{G}}^{\mathbb{G}}$. We denote by $A[[T]]_{sr}$ the $A$-submodule of $A[[T]]$ generated by $1$ and by finite products of terms $p_{e,i}(T)=\mathbb{L}^eT^i/(1-\mathbb{L}^eT^i)$ with $e$ in $\mathbb{Z}$ and $i$ in $\mathbb{N}_{>0}$. There is a unique $A$-linear morphism 
$$\lim_{T\rightarrow\infty}: A[[T]]_{sr}\rightarrow A$$
such that 
$$\lim_{T\rightarrow\infty}\Big(\prod_{i\in I}p_{e_i,j_i}(T)\Big)=(-1)^{|I|}$$
for every family $((e_i,j_i))_{i\in I}$ in $\mathbb{Z}\times\mathbb{N}_{>0}$ with $I$ finite (possibly empty).\\
\indent From now on, we will use the following notations
$$\mathbb{R}_{\geq 0}^I:=\{a=(a_1,\dots,a_n)\in\mathbb{R}_{\geq 0}^n\ |\ a_i=0\ \text{for}\ i\not\in I\},$$
and
$$\mathbb{R}_{>0}^I:=\{a=(a_1,\dots,a_n)\in\mathbb{R}_{\geq 0}^n\ |\ a_i=0\ \text{iff}\ i\not\in I\},$$
for $I$ a subset of $\{1,\dots,n\}$. The sets $\mathbb{Z}_{\geq 0}^I$, $\mathbb{Z}_{> 0}^I$ and $\mathbb{N}_{> 0}^I$ are defined similarly.\\
\indent Let $\Delta$ be a rational polyhedral convex cone in $\mathbb{R}_{>0}^I$ and let $\overline{\Delta}$ denote its closure in $\mathbb{R}_{\geq 0}^I$ with $I$ a finite set. Let $l$ and $l'$ be two integer linear forms on $\mathbb{Z}^I$ positive on $\overline{\Delta}\setminus\{0\}$. Let us consider the series
$$S_{\Delta,l,l'}(T):=\sum_{k\in\Delta\cap\mathbb{N}_{>0}^I}\mathbb{L}^{-l'(k)}T^{l(k)}$$
in $\mathbb{Z}[\mathbb{L},\mathbb{L}^{-1}][[T]]$. In this paper, we will use the following lemma.
\begin{lemma}[\cite{G}]\label{lem2.1}
With previous notations and hypotheses, assuming that $\Delta$ is open in its linear span $\overline{\Delta}$, the series $S_{\Delta,l,l'}(T)$ lies in $\mathbb{Z}[\mathbb{L},\mathbb{L}^{-1}][[T]]_{sr}$ and
$$\lim_{T\rightarrow\infty}S_{\Delta,l,l'}(T)=(-1)^{\dim(\Delta)}.$$
\end{lemma}


\section{Newton polyhedron of a regular function}\label{sec3}
\subsection{Newton polyhedron}
Let $g(x)=\sum_{\alpha\in\mathbb{N}^n}a_{\alpha}x^{\alpha}$ be a polynomial in $n$ variables $x=(x_1,\dots,x_n)$ such that $g(0)=0$. We denote by $supp(g)$ the set of exponents $\alpha$ in $\mathbb{N}^n$ with $a_{\alpha}\not=0$. The Newton polyhedron $\Gamma$ of $g$ is the convex hull of $supp(g)+\mathbb{R}_{\geq 0}^n$. For a compact face $\gamma$ of $\Gamma$, we denote by $g_{\gamma}$ the following quasi-homogenous polynomial
$$g_{\gamma}(x)=\sum_{\alpha\in\gamma}a_{\alpha}x^{\alpha}.$$
We say $g$ is {\it non-degenerate} with respect to its Newton polyhedron $\Gamma$ if, for every compact face $\gamma$ of $\Gamma$, the {\it face} function $g_{\gamma}$ is smooth on $\mathbb{G}^n$.\\
\indent To the Newton polyhedron $\Gamma$ we associate a function $l_{\Gamma}$ which assigns to a vector $a$ in $\mathbb{R}_{\geq 0}^n$ the value $\inf_{b\in\Gamma}\langle a,b\rangle$, with $\langle a,b\rangle$ being the standard inner product of $a$ and $b$. For $a$ in $\mathbb{R}_{\geq 0}^n$, we denote by $\gamma_a$ the face of $\Gamma$ on which the restriction of the function $\langle a,.\rangle$ on $\Gamma$ attains its minimum, i.e., $b\in\Gamma$ is in $\gamma_a$ if and only if 
$$\langle a,b\rangle=l_{\Gamma}(a)=\min_{b\in\Gamma}\langle a,b\rangle.$$
\indent For $a=0$ in $\mathbb{R}_{\geq 0}^n$, $\gamma_a=\Gamma$. If $a\not=0$, $\gamma_a$ is a proper face of $\Gamma$. Furthermore, $\gamma_a$ is a compact face of $\Gamma$ if and only if $a$ is in $\mathbb{R}_{>0}^n$. For any face $\gamma$ of the Newton polyhedron $\Gamma$, we denote by $\sigma(\gamma)$ the cone $\{a\in\mathbb{R}_{\geq 0}^n\ |\ \gamma_a=\gamma\}$. Then its closure is given by $\overline{\sigma}(\gamma)=\{a\in\mathbb{R}_{\geq 0}^n\ |\ \gamma_a\supset\gamma\}$.\\
\indent A {\it fan} $\mathscr{F}$ is a finite set of rational polyhedral cones such that every face of a cone of $\mathscr{F}$ is also a cone of $\mathscr{F}$, and the intersection of two arbitrary cones of $\mathscr{F}$ is the common face of them. It is easily shown that, when $\gamma$ runs over the faces of $\Gamma$, $\overline{\sigma}(\gamma)$ form a fan in $\mathbb{R}_{\geq 0}^n$ partitioning $\mathbb{R}_{\geq 0}^n$ into rational polyhedral cones.

\subsection{Partition of $\mathbb{R}_{\geq 0}^{n_1}\times\mathbb{R}_{>0}^{n_2}$ with respect to $g$}
 Write $n=n_1+n_2$ with $n_1\geq 0$, $n_2\geq 0$. Let $g$ be a function on $\mathbb{A}_{\kappa}^n$ which is non-degenerate with respect to the Newton polyhedron $\Gamma$ of $g$. Let $\gamma$ be a compact face of $\Gamma$. A proper face $\epsilon$ of $\Gamma$ is said to {\it lean} on $\gamma$ if there exists a subset $I$ of $\{1,\dots,n\}$ such that
$$\epsilon=\gamma+\mathbb{R}_{\geq 0}^I=\{a+b\ |\ a\in\gamma, b\in\mathbb{R}_{\geq 0}^I\}.$$
Note that $\dim(\epsilon)=\dim(\gamma)+|I|$. Clearly, the face $\epsilon$ is non-compact when $I$ is nonempty. The following lemmas are trivial.
\begin{lemma}\label{lem3.4}
If $\gamma+\mathbb{R}_{\geq 0}^I$ is a face leaning on a compact face $\gamma$ of $\Gamma$, then for every $J$ subset of $I$, $\gamma+\mathbb{R}_{\geq 0}^J$ is also a face of $\Gamma$ leaning on $\gamma$.
\end{lemma}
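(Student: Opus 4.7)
The plan is to exploit the dimension identity $\dim\epsilon=\dim\gamma+|I|$ built into the notion of a leant face in order to split $\epsilon$ as a genuine polyhedral product. Concretely, I would consider the affine map
\[
\phi : \gamma \times \mathbb{R}_{\geq 0}^I \longrightarrow \epsilon, \qquad (g,c) \longmapsto g+c,
\]
which is surjective by the very definition of $\epsilon$. To establish injectivity, any coincidence $g_1+c_1=g_2+c_2$ would produce a vector $g_1-g_2=c_2-c_1$ lying in the intersection of the linear subspace $V_\gamma$ parallel to $\mathrm{aff}(\gamma)$ with the linear subspace $V_I=\mathrm{span}_\mathbb{R}(\mathbb{R}_{\geq 0}^I)$; this intersection has dimension $\dim\gamma+|I|-\dim\epsilon=0$, so $\phi$ is a bijection identifying $\epsilon$ with the polyhedral product $\gamma\times\mathbb{R}_{\geq 0}^I$.

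With this identification in hand, I would invoke the standard fact that the faces of a product of two polyhedra are precisely the products of their faces: a linear functional on $P\times Q$ decomposes as a sum of one on $P$ and one on $Q$, and the two minimizations split. The faces of the orthant $\mathbb{R}_{\geq 0}^I$ are exactly the coordinate sub-orthants $\mathbb{R}_{\geq 0}^J$ for $J\subset I$, so $\phi(\gamma\times\mathbb{R}_{\geq 0}^J)=\gamma+\mathbb{R}_{\geq 0}^J=\epsilon'$ is a face of $\epsilon$. Since the face relation is transitive on a polyhedron, $\epsilon'$ is a face of $\Gamma$ as well.

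Finally, to verify that $\epsilon'$ is itself leant on $\gamma$, one reads off the dimension directly from the product structure: $\dim\epsilon'=\dim(\gamma\times\mathbb{R}_{\geq 0}^J)=\dim\gamma+|J|$, which is the required dimension identity (equivalently, the transversality $V_\gamma\cap\mathrm{span}_\mathbb{R}(\mathbb{R}_{\geq 0}^J)\subset V_\gamma\cap V_I=\{0\}$ is automatic because $J\subset I$). The only real subtlety I anticipate is the injectivity of $\phi$, where the dimension hypothesis encoded in the definition of leant is essential; once the product identification is established, everything else is an elementary application of polyhedral combinatorics.
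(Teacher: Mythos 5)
Your proof is correct and follows the same skeleton as the paper's: both observe that $\epsilon'=\gamma+\mathbb{R}_{\geq 0}^J$ is a face of $\epsilon$ and then conclude it is a face of $\Gamma$ by transitivity of the face relation. The paper's proof simply asserts that $\epsilon'$ is a face of $\epsilon$ without argument, whereas you supply the missing justification via the affine identification $\epsilon\cong\gamma\times\mathbb{R}_{\geq 0}^I$ (whose injectivity rests on the dimension equality $\dim\epsilon=\dim\gamma+|I|$, which the paper records as a remark rather than as part of the definition of leant) together with the standard fact that faces of a product are products of faces.
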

\indent Notice that if $I=\emptyset$ the face $\gamma+\mathbb{R}_{\geq 0}^I$ reduces to the compact face $\gamma$. If $\epsilon=\gamma+\mathbb{R}_{\geq 0}^I$, we denote $\sigma_{\gamma,I}:=\sigma(\epsilon)$. It is clear that $\dim(\sigma_{\gamma,I})=n-|I|-\dim(\gamma)$.
\begin{lemma}\label{lem3.5}
If $\sigma_{\gamma,I}$ is contained in $\mathbb{R}_{\geq 0}^{n_1}\times\mathbb{R}_{>0}^{n_2}$, then for every $J$ subset of $I$, $\sigma_{\gamma,J}$ is contained in $\mathbb{R}_{\geq 0}^{n_1}\times\mathbb{R}_{>0}^{n_2}$. Moreover, $\sigma_{\gamma,I}$ is a face of $\sigma_{\gamma,J}$.
\end{lemma}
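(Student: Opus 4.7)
The plan is to exploit the elementary dictionary between the zero-pattern of a linear form $a\in\mathbb{R}_{\geq 0}^n$ and the recession directions of the corresponding face $\gamma_a$. Concretely, I will first verify the characterization: for a face $\delta=\gamma+\mathbb{R}_{\geq 0}^K$ of $\Gamma$ leant on $\gamma$ with associated cone $\sigma_{\gamma,K}=\sigma(\delta)$, a vector $a\in\mathbb{R}_{\geq 0}^n$ lies in $\sigma_{\gamma,K}$ if and only if $\gamma_a\cap\mathrm{conv}(\mathrm{supp}(g))=\gamma$ and, crucially, the zero-set of $a$ is exactly $K$. The nontrivial direction of this equivalence is that $a_i=0$ iff $e_i$ is a recession direction of $\gamma_a$: since $a\geq 0$, the function $\langle a,\cdot\rangle$ is non-decreasing along $e_i$, so its minimizing face contains the ray $\mathbb{R}_{\geq 0}e_i$ exactly when $\langle a,e_i\rangle=a_i=0$.

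For the first assertion, I would pick any $a\in\sigma_{\gamma,I}$ (this set is nonempty because $\epsilon$ is an actual face of $\Gamma$, so $\sigma(\epsilon)$ is the relative interior of a nonempty cone). By the characterization $a_i=0$ for every $i\in I$, and by the hypothesis $a\in\mathbb{R}_{\geq 0}^{n_1}\times\mathbb{R}_{>0}^{n_2}$ we have $a_i>0$ for all $i>n_1$. Combining these forces $I\subseteq\{1,\dots,n_1\}$, hence $J\subseteq\{1,\dots,n_1\}$ as well. Now for an arbitrary $a'\in\sigma_{\gamma,J}$ the characterization gives $\{i:a'_i=0\}=J\subseteq\{1,\dots,n_1\}$, so $a'_i>0$ for each $i>n_1$, which yields $\sigma_{\gamma,J}\subseteq\mathbb{R}_{\geq 0}^{n_1}\times\mathbb{R}_{>0}^{n_2}$.

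For the second assertion, note that $J\subseteq I$ implies $\epsilon'=\gamma+\mathbb{R}_{\geq 0}^J\subseteq\gamma+\mathbb{R}_{\geq 0}^I=\epsilon$, and by Lemma \ref{lem3.4} both are faces of $\Gamma$ containing $\gamma$. Under the bijection of Lemma \ref{lem3.2}(i) between faces of $\Gamma$ containing $\gamma$ and faces of $\overline{\sigma}(\gamma)$, the relation $\epsilon'\subseteq\epsilon$ is inclusion-reversed: this is immediate from the description $\overline{\sigma}(\delta)=\{a:\gamma_a\supseteq\delta\}$ of Lemma \ref{lem3.1}(ii), and it is compatible with the facet-to-facet statement in Lemma \ref{lem3.2}(ii). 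Applying Lemma \ref{lem3.2}(ii) along a chain of faces between $\epsilon'$ and $\epsilon$ (reducing $I\setminus J$ one index at a time) shows that $\overline{\sigma}(\epsilon)=\overline{\sigma_{\gamma,I}}$ is a face of $\overline{\sigma}(\epsilon')=\overline{\sigma_{\gamma,J}}$, which is what is meant by the statement that $\sigma_{\gamma,I}$ is a face of $\sigma_{\gamma,J}$.

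The only genuinely delicate step is the characterization in the opening paragraph, since one must be careful not to confuse the recession cone of $\gamma_a$ (controlled by the zero pattern of $a$) with the compact part (controlled by where $\langle a,\cdot\rangle$ attains its minimum on $\mathrm{conv}(\mathrm{supp}(g))$). Everything else is formal bookkeeping using Lemmas \ref{lem3.1} and \ref{lem3.2}.
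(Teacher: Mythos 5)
Your argument is correct, but it follows a different path from the paper's. The paper proves the containment by appealing to Lemma~\ref{lem3.3}: since $\epsilon'=\gamma+\mathbb{R}_{\geq 0}^J$ is a face of $\epsilon=\gamma+\mathbb{R}_{\geq 0}^I$, every primitive normal vector $w_i$ cutting out $\epsilon$ appears among the normals $v_j$ cutting out $\epsilon'$, so an arbitrary point $\sum\mu_j v_j$ of $\sigma_{\gamma,J}$ splits as $a+b$ with $a\in\sigma_{\gamma,I}$ (the part coming from the $w_i$'s) and $b$ a nonnegative combination of the remaining $v_j\in\mathbb{N}^n$; positivity of the last $n_2$ coordinates of $a$ is then inherited by $a+b$. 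You instead use the observation that for any $a\in\sigma_{\gamma,K}$ with $\gamma$ compact, the zero-set of $a$ is exactly $K$ (because the recession cone of $\gamma_a$ equals $\mathbb{R}_{\geq 0}^{\{i:a_i=0\}}$), so $\sigma_{\gamma,I}\subset\mathbb{R}_{\geq 0}^{n_1}\times\mathbb{R}_{>0}^{n_2}$ is equivalent to $I\subseteq\{1,\dots,n_1\}$, a condition that trivially passes to the subset $J$. This zero-pattern observation is precisely what the paper isolates a bit later as Lemma~\ref{lem3.8}, so your proof effectively front-loads that lemma and derives Lemma~\ref{lem3.5} as a corollary of it, whereas the paper keeps the two independent and derives Lemma~\ref{lem3.8} separately afterward. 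Note also that the ``if and only if'' characterization in your opening paragraph is more than you need: only the forward implication (membership in $\sigma_{\gamma,K}$ forces the zero-set to be $K$) is used, and that direction is the uncontroversial one. For the face assertion both proofs cite Lemma~\ref{lem3.2}; you spell out the chain of facets (adding one index at a time to pass from $J$ to $I$), which is a welcome elaboration of the paper's terser ``the final statement is a corollary of Lemma~\ref{lem3.2}.''
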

\begin{lemma}\label{lem3.8}
Assume that $\gamma$ is a compact face and $\epsilon=\gamma+\mathbb{R}^I$ is a face of $\Gamma$. Then $\sigma_{\gamma,I}$ is contained in $\mathbb{R}_{\geq 0}^{n_1}\times\mathbb{R}_{>0}^{n_2}$ if and only if $I$ is a subset of $\{1,\dots,n_1\}$.
\end{lemma}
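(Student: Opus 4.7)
The plan is to reduce the lemma to a more precise claim about coordinates: for every $a \in \sigma_{\gamma,I} = \sigma(\epsilon)$ and every $i \in \{1,\dots,n\}$, one has $a_i = 0$ if and only if $i \in I$. Granting this claim, the lemma follows immediately: the containment $\sigma_{\gamma,I} \subset \mathbb{R}_{\geq 0}^{n_1}\times\mathbb{R}_{>0}^{n_2}$ means that every $a \in \sigma_{\gamma,I}$ satisfies $a_i > 0$ for each $i \in \{n_1+1,\dots,n\}$, and by the claim this is equivalent to $\{n_1+1,\dots,n\} \cap I = \emptyset$, i.e.\ to $I \subset \{1,\dots,n_1\}$. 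Nonemptiness of $\sigma(\epsilon)$ — needed for the ``only if'' direction to have content — is guaranteed by the hypothesis that $\epsilon$ is a face of $\Gamma$.

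To establish the claim, I would use the description $\sigma(\epsilon) = \{a \in \mathbb{R}_{\geq 0}^n : \gamma_a = \epsilon\}$ from Lemma~\ref{lem3.1}, together with the following elementary observation: for any face $\gamma_a$ of $\Gamma$, the standard basis vector $e_i$ is a direction of recession of $\gamma_a$ if and only if $a_i = 0$. Indeed, $e_i$ is always a direction of recession of $\Gamma$, so for any $b \in \gamma_a$ the ray $b + t e_i$ lies in $\Gamma$; on this ray $\langle a,\cdot\rangle$ takes the constant value $\langle a, b\rangle = l_\Gamma(a)$ precisely when $a_i = 0$, and in that case the whole ray sits inside $\gamma_a$. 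Thus the claim reduces to: $e_i$ is a direction of recession of $\epsilon$ if and only if $i \in I$.

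The ``if'' direction is immediate from $\mathbb{R}_{\geq 0}^I \subset \mathrm{rec}(\epsilon)$. The ``only if'' direction is where the compactness of $\gamma$ is crucial: since $\gamma$ is compact, $\mathrm{rec}(\gamma) = \{0\}$, and the recession cone of a Minkowski sum with a compact summand equals the recession cone of the other summand, so $\mathrm{rec}(\epsilon) = \mathrm{rec}(\gamma+\mathbb{R}_{\geq 0}^I) = \mathbb{R}_{\geq 0}^I$. Consequently $e_i \in \mathrm{rec}(\epsilon)$ forces $i \in I$. The main (minor) obstacle is just this recession-cone identity; one can alternatively verify it by hand by noting that for $i \notin I$ the $i$-th coordinate is bounded on $\gamma + \mathbb{R}_{\geq 0}^I$ by $\max_{b \in \gamma} b_i$, which is finite by compactness, so $e_i$ cannot be a recession direction. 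Everything else is a direct translation between the open-cone description of $\sigma_{\gamma,I}$ and the coordinate-positivity condition defining $\mathbb{R}_{\geq 0}^{n_1}\times\mathbb{R}_{>0}^{n_2}$.
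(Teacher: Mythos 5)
Your proof is correct and follows essentially the same approach as the paper: both hinge on the observation that for any $a \in \sigma_{\gamma,I} = \sigma(\epsilon)$ one has $I = \{\,i : a_i = 0\,\}$. The paper merely asserts this fact with a ``Notice that,'' whereas you actually justify it via the recession-cone argument ($e_i \in \mathrm{rec}(\gamma_a)$ iff $a_i=0$, and $\mathrm{rec}(\gamma+\mathbb{R}_{\geq 0}^I)=\mathbb{R}_{\geq 0}^I$ because $\gamma$ is compact), so your version supplies the detail the paper omits.
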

\indent Fix a compact face $\gamma$ of $\Gamma$. Let $M$ be a maximal element (in the inclusion relation) of the family of the subsets of $\{1,\dots,n_1\}$ such that $\gamma+\mathbb{R}_{\geq 0}^M$ is a face of $\Gamma$ (thus, by Lemma \ref{lem3.8}, $\sigma_{\gamma,M}$ is contained in $\mathbb{R}_{\geq 0}^{n_1}\times\mathbb{R}_{>0}^{n_2}$).  Then, for every $I$ subset of $M$, $\gamma+\mathbb{R}_{\geq 0}^I$ is a face of $\Gamma$ due to Lemma \ref{lem3.4}, and $\sigma_{\gamma,I}$ is contained in $\mathbb{R}_{\geq 0}^{n_1}\times\mathbb{R}_{>0}^{n_2}$ by Lemma \ref{lem3.5}. We thus have proved the following result.
\begin{proposition}\label{prop3.6}
There exists a canonical fan in $\mathbb{R}_{\geq 0}^{n_1}\times\mathbb{R}_{>0}^{n_2}$ with respect to $g$ partitioning it into the cones $\sigma_{\gamma,I}$, where $I$ runs over the subsets of $M$, $M$ runs over the maximal subsets of $\{1,\dots,n_1\}$ such that $\gamma+\mathbb{R}_{\geq 0}^M$ is a face of $\Gamma$, and $\gamma$ runs over the compact faces of $\Gamma$.
\end{proposition}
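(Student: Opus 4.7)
The plan is to extract the proposition as a direct consequence of Lemmas \ref{lem3.2}, \ref{lem3.4}, \ref{lem3.5}, and \ref{lem3.8}: the fan structure will be inherited from Lemma \ref{lem3.2}, while Lemma \ref{lem3.8} will select exactly the cones that lie in $\mathbb{R}_{\geq 0}^{n_1}\times\mathbb{R}_{>0}^{n_2}$ and Lemmas \ref{lem3.4}--\ref{lem3.5} will produce the indexing by pairs $(\gamma,I)$ with $I\subset M$.

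First I would record a standard structural fact about $\Gamma$: since $\Gamma$ is the Minkowski sum of the compact polytope $\mathrm{conv}(\mathrm{supp}(g))$ and the recession cone $\mathbb{R}_{\geq 0}^n$, every face $\epsilon$ of $\Gamma$ has a unique decomposition $\epsilon=\gamma+\mathbb{R}_{\geq 0}^I$, where $\gamma$ is the compact face obtained as the convex hull of the vertices of $\epsilon$ and $\mathbb{R}_{\geq 0}^I$ is its recession cone (this is implicit in the proof of Lemma \ref{lem3.1}). Combined with Lemma \ref{lem3.8}, the cones $\sigma(\epsilon)$ meeting $\mathbb{R}_{\geq 0}^{n_1}\times\mathbb{R}_{>0}^{n_2}$ are precisely those for which $I\subset\{1,\dots,n_1\}$, and then $\sigma(\epsilon)=\sigma_{\gamma,I}$ is already contained in $\mathbb{R}_{\geq 0}^{n_1}\times\mathbb{R}_{>0}^{n_2}$. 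To match the enumeration in the statement, for each compact face $\gamma$ I fix the maximal subsets $M\subset\{1,\dots,n_1\}$ for which $\gamma+\mathbb{R}_{\geq 0}^M$ is a face of $\Gamma$; then Lemma \ref{lem3.4} ensures that every $I\subset M$ yields such a face, and conversely every admissible $I$ is a subset of at least one such maximal $M$.

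Next I would verify the partition property. Given $a\in\mathbb{R}_{\geq 0}^{n_1}\times\mathbb{R}_{>0}^{n_2}$, Lemma \ref{lem3.1} produces a unique face $\gamma_a$ of $\Gamma$ with $a\in\sigma(\gamma_a)$, and by the uniqueness of the decomposition above this writes $a$ in exactly one $\sigma_{\gamma,I}$ of the collection; Lemma \ref{lem3.8} guarantees $I\subset\{1,\dots,n_1\}$. The fan property then follows from Lemma \ref{lem3.2}: the closures $\overline{\sigma}_{\gamma,I}$ form a subcollection of the global fan on $\mathbb{R}_{\geq 0}^n$, and Lemma \ref{lem3.5} shows that this subcollection is closed under taking faces (if $\sigma_{\gamma,I}\subset\mathbb{R}_{\geq 0}^{n_1}\times\mathbb{R}_{>0}^{n_2}$ and $J\subset I$, then $\sigma_{\gamma,J}$ remains in $\mathbb{R}_{\geq 0}^{n_1}\times\mathbb{R}_{>0}^{n_2}$ and is a face of $\sigma_{\gamma,J}$ via the reverse inclusion). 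Intersections of two cones being common faces is inherited from the ambient fan.

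The main — but ultimately mild — obstacle is the bookkeeping around the enumeration by the triple $(\gamma,M,I)$: since a single admissible pair $(\gamma,I)$ may be contained in several maximal $M$'s, one must be careful to regard the proposition as asserting the existence of the underlying \emph{set} of cones $\{\sigma_{\gamma,I}\}$ rather than a parameterization by $(\gamma,M,I)$, so as not to double-count. Once this is clarified, the proposition is immediate from the four lemmas cited above.
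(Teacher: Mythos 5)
Your argument is correct and proceeds along essentially the same lines as the paper's, which is precisely the short paragraph preceding the proposition combining Lemmas~\ref{lem3.2}, \ref{lem3.4}, \ref{lem3.5}, and \ref{lem3.8}. You are in fact a bit more thorough than the paper in explicitly checking the covering/uniqueness step via Lemma~\ref{lem3.1} and flagging the potential double-counting over several maximal $M$'s (minor note: Lemma~\ref{lem3.5} gives that $\sigma_{\gamma,I}$ is a face of $\sigma_{\gamma,J}$ for $J\subset I$, not the converse, so the fan structure is inherited from the ambient fan of Lemma~\ref{lem3.2} restricted to the region rather than from literal face-closure of the subcollection).
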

\begin{example}\label{ex3.8}
Consider a function $g(x_1,\dots,x_n)$ with $\Gamma_g$ having a unique vertex $P$. Then the $k$-dimensional faces of $\Gamma$ leaning on $P$ have the form
$$P+\mathbb{R}_{\geq 0}^I$$
with $I$ subsets of $\{1,\dots,n\}$ and $|I|=k$, for $k=0,\dots,n-1$. We deduce from Lemma \ref{lem3.8} that the canonical partition of $\mathbb{R}_{\geq 0}^{n_1}\times\mathbb{R}_{>0}^{n_2}$ with respect to $g$ is given by the cones $\sigma_{P,I}$, with $I$ subsets of $\{1,\dots,n_1\}$.
\end{example}
\begin{remark}\label{rem3.10}
In the case $n_1=0$, we reduce to the work by Guibert (cf. \cite{G}). More clearly, for each compact face $\gamma$ of $\Gamma$, all the maximal subsets $M$ of $\{1,\dots,n\}$, of which $\gamma+\mathbb{R}_{\geq 0}^M$ is a face of $\Gamma$ and $\sigma_{\gamma,M}\subset \mathbb{R}_{>0}^{n}$, are empty.
\end{remark}

\section{Computation of $i_1^*\mathcal{S}_g$ and $\int_{\mathbb{A}_{\kappa}^{d_1}}i_1^*\mathcal{S}_g$}
Consider a regular function $g$ on $\mathbb{A}_{\kappa}^n$. We assume that $g$ is non-degenerate with respect to its Newton polyhedron $\Gamma$. Denote by $i_1$ the natural inclusion $\mathbb{A}_{\kappa}^{n_1}\hookrightarrow \mathbb{A}_{\kappa}^n$ or $\mathbb{A}_{\kappa}^{n_1}\times_{\kappa} \mathbb{G}\hookrightarrow \mathbb{A}_{\kappa}^n\times_{\kappa} \mathbb{G}$.

\subsection{The motivic zeta function $Z_{g}(T)$}
We identify the arc space $\mathscr{L}(\mathbb{A}_{\kappa}^n)$ with the space of formal power series $k[[t]]^n$ via the system of coordinates $x_1,\dots,x_n$. For every arc $\varphi\in \mathscr{L}(\mathbb{A}_{\kappa}^n)$ we note $ord_tx(\varphi)=(ord_tx_1(\varphi),\dots,ord_tx_n(\varphi))$. For every $m\in\mathbb{N}_{>0}$ and $a\in\mathbb{N}^n$ we set
$$\mathscr{X}_{a,m}(g)=\mathscr{X}_m(g)\cap \pi_m(\mathscr{X}_a),$$
where the spaces $\mathscr{X}_m(g)$ and $\mathscr{X}_a$ are defined as follows
\begin{align*}
\mathscr{X}_m(g)&=\{\varphi\in \mathscr{L}_m(\mathbb{A}_{\kappa}^n)\ |\ ord_tg(\varphi)=m\},\\
\mathscr{X}_a&=\{\varphi\in \mathscr{L}(\mathbb{A}_{\kappa}^n)\ |\ ord_tx(\varphi)=a\}.
\end{align*}
It is clear that $\mathscr{X}_{a,m}(g)$ is a variety over $X_0(g)\times_{\kappa} \mathbb{G}$ in which the morphism to $X_0(g)$ is induced by the canonical morphism $\mathscr{L}_m(\mathbb{A}_{\kappa}^n)\rightarrow \mathbb{A}_{\kappa}^n$ and the morphism to $\mathbb{G}$ is the morphism $ac(g)$. Note that $\mathscr{X}_{a,m}(g)$ is invariant by the $\mathbb{G}$-action on $\mathscr{L}_m(\mathbb{A}_{\kappa}^n)$.\\
\indent For every $a\in\mathbb{N}^n$ and $\varphi\in\mathscr{X}_a$, $ord_tg(\varphi)\geq l_{\Gamma}(a)$ by the definition of $l_{\Gamma}$. Furthermore, $\mathscr{X}_m(g)$ can be expressed as a disjoint union $\bigcup_{a\in\mathbb{N}^n}\mathscr{X}_{a,m}(g)$ of the subspaces $\mathscr{X}_{a,m}(g)$ for $a$ in $\mathbb{N}^n$. Then the motivic zeta function $Z_{g}(T)$ of $g$ can be written in the following form
\begin{align*}
Z_{g}(T)
=&\sum_{a\in\mathbb{N}^n}\sum_{m\geq l_{\Gamma}(a)}[\mathscr{X}_{a,m}(g)]\mathbb{L}^{-nm}T^m\\
=&\sum_{a\in\mathbb{N}^n}\Big([\mathscr{X}_{a,l_{\Gamma}(a)}(g)]\mathbb{L}^{-nl_{\Gamma}(a)}T^{l_{\Gamma}(a)}+\sum_{m\geq l_{\Gamma}(a)+1}[\mathscr{X}_{a,m}(g)]\mathbb{L}^{-nm}T^m\Big)\\
=&:Z^0(T)+Z^1(T).
\end{align*}
\indent There is a canonical partition of $\mathbb{R}_{\geq 0}^n$ into the rational polyhedral cones $\sigma(\gamma)$ with $\gamma$ running over the proper faces of $\Gamma$, so we deduce that
\begin{align*}
Z^0(T)&=\sum_{\gamma}\sum_{a\in\sigma(\gamma)}[\mathscr{X}_{a,l_{\Gamma}(a)}(g)]\mathbb{L}^{-nl_{\Gamma}(a)}T^{l_{\Gamma}(a)},\\
Z^1(T)&=\sum_{\gamma}\sum_{a\in\sigma(\gamma)}\sum_{k\geq 1}[\mathscr{X}_{a,l_{\Gamma}(a)+k}(g)]\mathbb{L}^{-n(l_{\Gamma}(a)+k)}T^{l_{\Gamma}(a)+k},
\end{align*}
where the sum $\sum_{\gamma}$ runs over the proper faces $\gamma$ of $\Gamma$.

\subsection{Computation of $i_1^*Z_g(T)$}
Assume that $g$ satisfies the additional condition that $\mathbb{A}_{\kappa}^{n_1}$ is naturally included in $X_0(g)$ via the morphism $i_1$. To compute $i_1^*Z_g(T)$, we consider the canonical fan in $\mathbb{R}_{\geq 0}^{n_1}\times\mathbb{R}_{>0}^{n_2}$ with respect to $g$. Denote by $\Gamma_c$ the set of compact faces of $\Gamma$, by $\mathfrak{M}_{\gamma}$ the set of maximal subsets $M$ of $\{1,\dots,n_1\}$ such that $\gamma+\mathbb{R}_{\geq 0}^M$ is a face of $\Gamma$. By Proposition \ref{prop3.6}, we can partition $\mathbb{R}_{\geq 0}^{n_1}\times\mathbb{R}_{>0}^{n_2}$ into the cones $\sigma_{\gamma,I}$, with $I$ subsets of $M$, $M$ in $\mathfrak{M}_{\gamma}$ and $\gamma$ in $\Gamma_c$. Assume that $\mathfrak{M}_{\gamma}=\{M_1,\dots,M_p\}$. We denote by $\mathfrak{S}_{\gamma}$ the family of subsets of one of the sets $M_1,\dots,M_p$.  Then we have
\begin{align*}
i_1^*Z^0(T)&=i_1^*\Big(\sum_{\gamma\in\Gamma_c}\sum_{I \in\mathfrak{S}_{\gamma}}\sum_{a\in \sigma_{\gamma,I}}[\mathscr{X}_{a,l_{\Gamma}(a)}(g)]\mathbb{L}^{-nl_{\Gamma}(a)}T^{l_{\Gamma}(a)}\Big),\\
i_1^*Z^1(T)&=i_1^*\Big(\sum_{\gamma\in\Gamma_c}\sum_{I\in\mathfrak{S}_{\gamma}}\sum_{a\in \sigma_{\gamma,I}}\sum_{k\geq 1}[\mathscr{X}_{a,l_{\Gamma}(a)+k}(g)]\mathbb{L}^{-n(l_{\Gamma}(a)+k)}T^{l_{\Gamma}(a)+k}\Big).
\end{align*}

\subsection{Class of $\mathscr{X}_{a,m}(g)$}
For a compact face $\gamma$ of $\Gamma$, consider the variety $X_{\gamma}:=\mathbb{G}^n\setminus g_{\gamma}^{-1}(0)$ endowed with a $\mathbb{G}$-action as follows: if $\gamma=\gamma_a$, $a=(a_1,\dots,a_n)$ then we set
$$s\cdot(\xi_1,\dots,\xi_n)=(s^{a_1}\xi_1,\dots,s^{a_n}\xi_n).$$
For each compact $\gamma$ and $I$ in $\mathfrak{S}_{\gamma}$, consider the morphism 
$$g_{\gamma,I}: X_{\gamma}=\mathbb{G}^n\setminus g_{\gamma}^{-1}(0)\rightarrow X_0(g)\times_{\kappa} \mathbb{G}$$
given by 
$$g_{\gamma,I}(\xi_1,\dots,\xi_n)=\big((\hat{\xi}_1,\dots,\hat{\xi}_n),g_{\gamma}(\xi_1,\dots,\xi_n)\big),$$
where $\hat{\xi}_i$ is defined as follows
\begin{equation*}
\hat{\xi}_i=
\begin{cases}
\xi_i \quad \text{if}\ i\in I\\
0\quad \text{otherwise}. 
\end{cases}
\end{equation*}
The first projection $X_{\gamma}\rightarrow X_0(g)$ is $\mathbb{G}$-equivariant in an obvious manner, and for $\gamma=\gamma_a$, the second $X_{\gamma}\rightarrow \mathbb{G}$ is diagonally monomial of weight $l_{\Gamma}(a)$ with respect to the $\mathbb{G}$-action since $g_{\gamma}(s\cdot(\xi_1,\dots,\xi_n))=s^{l_{\Gamma}(a)}g_{\gamma}(\xi_1,\dots,\xi_n)$ for any $s$ in $\mathbb{G}$. This defines a class $[g_{\gamma,I}: X_{\gamma}\rightarrow X_0(g)\times_{\kappa} \mathbb{G}]$ in $\mathscr{M}_{X_0(g)\times_{\kappa} \mathbb{G}}^{\mathbb{G}}$, which we denote by $\Phi_{\gamma,I}$. Notice that $\Phi_{\gamma,I}$ does not depend on the action thanks to the construction of the Grothendieck group (cf. \cite{GLM1}, \cite{GLM2}).\\
\indent We denote by $\Psi_{\gamma,I}$ the class in $\mathscr{M}_{X_0(g)\times_{\kappa} \mathbb{G}}^{\mathbb{G}}$ of the morphism
$$g_{\gamma}^{-1}(0)\times_{\kappa} \mathbb{G}\rightarrow X_0(g)\times_{\kappa} \mathbb{G},$$
which maps $\big((\xi_1,\dots,\xi_n),t\big)$ to $\big((\hat{\xi}_1,\dots,\hat{\xi}_n),t^{l_{\Gamma}(a)}\big)$, for $\gamma=\gamma_a$, with the $\mathbb{G}$-action on $g_{\gamma,I}^{-1}(0)$ given by $s\cdot(\xi_1,\dots,\xi_n)=(s^{a_1}\xi_1,\dots,s^{a_n}\xi_n)$ and the $\mathbb{G}$-action on $\mathbb{G}$ given by the multiplicative translation, and $g_{\gamma}^{-1}(0)\times_{\kappa} \mathbb{G}\rightarrow X_0(g)$ being $\mathbb{G}$-equivariant, $g_{\gamma}^{-1}(0)\times_{\kappa}\mathbb{G}\to\mathbb{G}$ being diagonally monomial of weight $l_{\Gamma}(a)$ with respect to the $\mathbb{G}$-action.
\begin{lemma}\label{lem4.1}
The following formulas hold in $\mathscr{M}_{X_0(g)\times_{\kappa} \mathbb{G}}^{\mathbb{G}}$ for every $a$ in $\sigma_{\gamma,I}$:
\begin{enumerate}
\item[(i)] If there is a non-empty subset $I$ of $\{1,\dots,n\}$ such that $a_i>m$ for any $i\in I$ and $g|_{\mathbb{A}_{\kappa}^{I^c}}=0$, then $[\mathscr{X}_{a,m}(g)]=0$.
\end{enumerate}
If $a_i\leq l_{\Gamma}(a)$ for any $i=1,\dots,n$, we have
\begin{enumerate}
\item[(ii)] $[\mathscr{X}_{a,l_{\Gamma}(a)}(g)]=\Phi_{\gamma,I}\mathbb{L}^{nl_{\Gamma}(a)-s(a)}$,
\item[(iii)] $[\mathscr{X}_{a,l_{\Gamma}(a)+k}(g)]=\Psi_{\gamma,I}\mathbb{L}^{n(l_{\Gamma}(a)+k)-s(a)}$ for $k\geq 1$.
\end{enumerate}
Here, $\mathbb{A}_{\kappa}^{I^c}:=\{(x_1,\dots,x_n)\in\mathbb{A}_{\kappa}^n\ |\ x_i=0\ \forall i\in I\}$, and $s(a):=\sum_{i=1}^na_i$.
\end{lemma}
\begin{proof}
The item (i) follows from the definition of $\mathscr{X}_{a,m}(g)$ and from the hypothesis on $g$. Indeed, every element of $\pi_m(\mathscr{X}_a)$ has the form $\varphi(t)=(x_1(t),\dots,x_n(t))$, where $x_j(t)$ is a polynomial of degree $\leq m$ in a variable $t$ for any $j=1,\dots,n$, and $x_i(t)$ is the zero polynomial if $i$ in $I$. Then $g(\varphi(t))=0$ and $ord_tg(\varphi)=\infty$, which means that $\mathscr{X}_{a,m}(g)=\emptyset$.\\
\indent The items (ii) and (iii) may be deduced from proofs of Guibert in \cite{G} (cf. \cite{G}, Lemmas 2.1.1 and 2.1.2) and from the isomorphism $\mathscr{M}_{X_0(g)}^{\hat{\mu}}\cong\mathscr{M}_{X_0(g)\times\mathbb{G}}^{\mathbb{G}}$ (cf. \cite{GLM1}, Proposition 2.6). In \cite{G}, Section 2.1 (in particular, Lemmas 2.1.1 and 2.1.2), Guibert only considers functions of the form $\sum_{\alpha\in\mathbb{N}_{>0}^n}f_{\alpha}x^{\alpha}$. Observe that his condition ``$\alpha\in\mathbb{N}_{>0}^n$" is equivalent to that $a_i\leq l_{\Gamma}(a)$ for any $i=1,\dots,n$. Finally, notice that the hypothesis of the non-degeneracy with respect to $\Gamma$ is in fact the main tool for the proofs.\\
 \indent There is also a way to prove (ii) directly as follows. An element $\varphi(t)$ of $\mathscr{X}_{a,l_{\Gamma}(a)}(g)$ has the form $\varphi(t)=(x_1(t),\dots,x_n(t))$, where $x_i(t)=\sum_{m=a_i}^{l_{\Gamma}(a)}c_{i,m}t^m$ with $c_{i,a_i}\not=0$ for $i=1,\dots,n$. Note that the coefficient of $t^{l_{\Gamma}(a)}$ in $g(\varphi(t))$ is equal to
\begin{align*}
\frac{1}{l_{\Gamma}(a)!}\cdot\frac{d^{l_{\Gamma}(a)}g(\varphi(t))}{dt^{l_{\Gamma}(a)}}|_{t=0}&=\frac{1}{l_{\Gamma}(a)!}\cdot\frac{d^{l_{\Gamma}(a)}g_{\gamma}(\varphi(t))}{dt^{l_{\Gamma}(a)}}|_{t=0}\\
&= g_{\gamma}(c_{1,a_1},\dots,c_{n,a_n})
\end{align*}
which is non-zero for every $a$ in $\sigma_{\gamma,I}$ and $(c_{1,a_1},\dots,c_{n,a_n})$ in $X_{\gamma}$. One deduces from this that $\mathscr{X}_{a,l_{\Gamma}(a)}(g)$ is isomophic to $X_{\gamma}\times_{\kappa}\mathbb{A}_{\kappa}^{nl_{\Gamma}(a)-s(a)}$ via the map 
$$\varphi(t)\mapsto \big((c_{i,a_i})_{1\leq i\leq n},(c_{i,m})_{1\leq i\leq n,a_i+1\leq m\leq l_{\Gamma}(a)}\big).$$
Here the action of $\mathbb{G}$ on $\mathbb{A}_{\kappa}^1$ is trivial. For any $s$ in $\mathbb{G}$, the arc $\varphi(st)$ is mapped to
$$\big((s^{a_i}c_{i,a_i})_{1\leq i\leq n},(c_{i,m})_{1\leq i\leq n,a_i+1\leq m\leq l_{\Gamma}(a)}\big)$$
which is by definition equal to 
$$s\cdot\big((c_{i,a_i})_{1\leq i\leq n},(c_{i,m})_{1\leq i\leq n,a_i+1\leq m\leq l_{\Gamma}(a)}\big).$$
This means that the $\mathbb{G}$-action is compatible with the isomorphism, i.e the isomorphism is $\mathbb{G}$-equivariant. Then the item (ii) follows.
\end{proof}
\begin{remark}\label{remarksss}
We have not known yet how to compute $[\mathscr{X}_{a,l_{\Gamma}(a)+k}(g)]$, for $k\geq 0$, without the assumptions as in Lemma \ref{lem4.1}.
\end{remark}
\begin{remark}\label{remark4.2}
Lemma \ref{lem4.1} and Remark \ref{remarksss} explain the reason why in the rest of this paper we will always assume that no vertex of the Newton polyhedron $\Gamma$ of $g$ lies in a coordinate plane, i.e. $a_i\leq l_{\Gamma}(a)$ for any $i=1,\dots,n$. In this case, $l_{\Gamma}(a)$ is expressed as $\sum_{i=1}^n\alpha_ia_i$ with $\alpha_i>0$ for any $i=1,\dots,n$. By Lemma \ref{lem4.1}, this hypothesis guarantees that, for every compact face $\gamma$ of $\Gamma$, $I$ in $\mathfrak{S}_{\gamma}$, and $k\geq 0$, all the terms of the sum $\sum_{a\in\sigma_{\gamma,I}}[\mathscr{X}_{a,l_{\Gamma}(a)+k}(g)]$ are non-zero if $\Phi_{\gamma,I}$ (resp. $\Psi_{\gamma,I}$) is non-zero (i.e. if the sum is non-zero). For the purpose of our work, we would like to consider such sums which may be reduced to the case of Lemma \ref{lem2.1}.
\end{remark}

\subsection{An explicit formula for $i_1^*\mathcal{S}_g$}
Assume that $g$ is a regular function on $\mathbb{A}_{\kappa}^n$ non-degenerate with respect to its Newton polyhedron $\Gamma$, that no vertex of $\Gamma$ lies in a coordinate $m$-plane ($m=1,\dots,n-1$), and that $X_0(g)$ contains $\mathbb{A}_{\kappa}^{n_1}\times_{\kappa}\{0\}$. One then deduces from Remark \ref{remark4.2} and Lemma \ref{lem4.1} that 
$$i_1^*Z^0(T)=\sum_{\gamma\in\Gamma_c}\sum_{I\in\mathfrak{S}_{\gamma}}\sum_{a\in \sigma_{\gamma,I}}i_1^*\Phi_{\gamma,I}\mathbb{L}^{-s(a)}T^{l_{\Gamma}(a)},$$
and
\begin{align*}
i_1^*Z^1(T)=&\ i_1^*\Big(\sum_{\gamma\in\Gamma_c}\sum_{I\in\mathfrak{S}_{\gamma}}\sum_{a\in \sigma_{\gamma,I}}\Psi_{\gamma,I}\mathbb{L}^{-s(a)}T^{l_{\Gamma}(a)}\sum_{k\geq 1}\mathbb{L}^{-k}T^k\Big)\\
=&\ \frac{\mathbb{L}^{-1}T}{1-\mathbb{L}^{-1}T}\sum_{\gamma\in\Gamma_c}\sum_{I\in\mathfrak{S}_{\gamma}}\sum_{a\in \sigma_{\gamma,I}}i_1^*\Psi_{\gamma,I}\mathbb{L}^{-s(a)}T^{l_{\Gamma}(a)}.
\end{align*}
\begin{proposition}\label{prop4.2}
With the previous notations and hypotheses, the following formula holds in $\mathscr{M}_{\mathbb{A}_{\kappa}^{n_1}\times_{\kappa} \mathbb{G}}^{\mathbb{G}}$:
\begin{align*}
i_1^*\mathcal{S}_g= \sum_{\gamma\in\Gamma_c}(-1)^{n+1-\dim(\gamma)}\sum_{I\in\mathfrak{S}_{\gamma}}(-1)^{|I|}[\mathbb{A}_{\kappa}^{n_1}\times_{X_0(g)}(\Phi_{\gamma,I}-\Psi_{\gamma,I})].
\end{align*}
\end{proposition}
\begin{proof}
The positivity of the sum function $s$ on $\overline{\sigma_{\gamma,I}}\setminus\{0\}$ is evident, that of the function $l_{\Gamma}$ on $\overline{\sigma_{\gamma,I}}\setminus\{0\}$ follows straightforward from Remark \ref{remark4.2}. Applying Lemma \ref{lem2.1}, notice that $\dim(\sigma_{\gamma,I})=n-|I|-\dim(\gamma)$, we have
\begin{align*}
\lim_{T\rightarrow\infty}\sum_{a\in \sigma_{\gamma,I}}\Phi_{\gamma,I}\mathbb{L}^{-s(a)}T^{l_{\Gamma}(a)}&=\Phi_{\gamma,I}\lim_{T\rightarrow\infty}\sum_{a\in \sigma_{\gamma,I}}\mathbb{L}^{-s(a)}T^{l_{\Gamma}(a)}\\
&=(-1)^{n-|I|-\dim(\gamma)}\Phi_{\gamma,I},
\end{align*}
and
\begin{align*}
\lim_{T\rightarrow\infty}\sum_{a\in \sigma_{\gamma,I}}\Psi_{\gamma,I}\mathbb{L}^{-s(a)}T^{l_{\Gamma}(a)}&=\Psi_{\gamma,I}\lim_{T\rightarrow\infty}\sum_{a\in \sigma_{\gamma,I}}\mathbb{L}^{-s(a)}T^{l_{\Gamma}(a)}\\
&=(-1)^{n-|I|-\dim(\gamma)}\Psi_{\gamma,I}.
\end{align*}
It follows that
\begin{align*}
\lim_{T\rightarrow\infty}i_1^*Z^0(T)=\sum_{\gamma\in\Gamma_c}(-1)^{n-\dim(\gamma)}\sum_{I\in\mathfrak{S}_{\gamma}}(-1)^{|I|}i_1^*\Phi_{\gamma,I},
\end{align*}
and
\begin{align*}
\lim_{T\rightarrow\infty}i_1^*Z^1(T)=\sum_{\gamma\in\Gamma_c}(-1)^{n+1-\dim(\gamma)}\sum_{I\in\mathfrak{S}_{\gamma}}(-1)^{|I|}i_1^*\Psi_{\gamma,I}.
\end{align*}
Then the proposition is proved.
\end{proof}
\begin{example}[cf. Example \ref{ex3.8}]
In the case $\Gamma_g$ has a unique compact face $P$, the classes $\Psi_{P,I}$ vanish. If we assume that $\alpha_i>0$ for every $i=1,\dots,n$, we have
$$i_1^*\mathcal{S}_g= (-1)^{n+1}\sum_{I\subset\{1,\dots,n_1\}}(-1)^{|I|}[\mathbb{A}_{\kappa}^{n_1}\times_{X_0(g)}\Phi_{P,I}].$$
\end{example}
\begin{corollary}[\cite{G}]\label{cor4.5}
Assume that $g$ is given by $g(x)=\sum_{\alpha\in\mathbb{N}_{>0}^n}a_{\alpha}x^{\alpha}$ in $\kappa[x]$ with $g(0)=0$. If $g$ is non-degenerate with respect to $\Gamma$, then 
\begin{align*}
\mathcal{S}_{g,0}= (-1)^{n-1}\sum_{\gamma\in\Gamma_c}(-1)^{\dim(\gamma)}[\{0\}\times_{X_0(g)}(\Phi_{\gamma,I}-\Psi_{\gamma,I})]
\end{align*}
holds in $\mathscr{M}_{\mathbb{G}}^{\mathbb{G}}$.
\end{corollary}
\begin{proof}
(See Remark \ref{rem3.10}) Apply Proposition \ref{prop4.2} to the case $n_1=0$. Here the natural inclusion $i_1:\mathbb{A}_{\kappa}^{n_1}\hookrightarrow\mathbb{A}_{\kappa}^n$ reduces to the inclusion $i_0: \{0\}\hookrightarrow\mathbb{A}_{\kappa}^n$. Moreover, in this case, by Lemma \ref{lem3.8}, for every compact face $\gamma$ of $\Gamma$, we have $\mathfrak{S}_{\gamma}=\{\emptyset\}$. Thus this corollary follows. Observe that this formula was already obtained by Guibert (cf. \cite{G}, Proposition 2.1.6).
\end{proof}

\subsection{}
Consider the function $g(x)=\sum_{\alpha\in H\cap\mathbb{N}^n}a_{\alpha}x^{\alpha}$ on $\mathbb{A}_{\kappa}^n$, where $H$ is the hyperplane in $\mathbb{R}_{\geq 0}^n$ defined by the following equation
\begin{align*}
\alpha_1+\cdots+\alpha_{n_1}=\alpha_{n_1+1}+\cdots+\alpha_p
\end{align*}
for some fixed $p$ such that $n_1< p\leq n$. Here as well as in Corollary \ref{cor4.5} we denote $x^{\alpha}$ for $x_1^{\alpha_1}\cdots x_n^{\alpha_n}$ with $\alpha=(\alpha_1,\dots,\alpha_n)$. Because $supp(g)$ lies on the hyperplane $H$, the compact faces of $\Gamma$ are contained in $H$. Moreover, for the same reason, for each compact $\gamma$, the non-compact faces of $\Gamma$ leaning on $\gamma$ exist. Note that, in this case, $\mathbb{A}_{\kappa}^{n_1}$ is naturally viewed as a subset of $X_0(g)$.
\begin{lemma}\label{lem4.4}
Assume that $g(x)=\sum_{\alpha\in H\cap\mathbb{N}^n}a_{\alpha}x^{\alpha}$ is non-degenerate with respect to $\Gamma$. Then, for every compact face $\gamma$ of $\Gamma$, we have $|\mathfrak{M}_{\gamma}|=1$ and the unique element of $\mathfrak{M}_{\gamma}$ is nonempty.
\end{lemma}
\begin{proof}
Let $\gamma$ be a compact face of $\Gamma$. Assume that $\gamma+\mathbb{R}_{\geq 0}^I$ is a face of $\Gamma$. Then, by Lemma \ref{lem3.8}, the cone $\sigma_{\gamma,I}$ is contained in $\mathbb{R}_{\geq 0}^{n_1}\times\mathbb{R}_{>0}^{n_2}$ if and only if $I$ is contained in $\{1,\dots,n_1\}$. Furthermore, we claim that if $\gamma+\mathbb{R}_{\geq 0}^I$ and $\gamma+\mathbb{R}_{\geq 0}^J$ are faces leaning on $\gamma$ such that the corresponding cones $\sigma_{\gamma,I}$ and $\sigma_{\gamma,J}$ are both contained in $\mathbb{R}_{\geq 0}^{n_1}\times\mathbb{R}_{>0}^{n_2}$, then so is $\gamma+\mathbb{R}_{\geq 0}^{I\cup J}$. Indeed, since $(\alpha_1,\dots,\alpha_n)$ is in $H$ one deduces that if $I$ and $J$ are contained in $\{1,\dots,n_1\}$, the intersection of $\gamma+\mathbb{R}_{\geq 0}^{I\cup J}$ with the interior of $\Gamma$ is empty. This together with the fact that $\gamma+\mathbb{R}_{\geq 0}^I$ and $\gamma+\mathbb{R}_{\geq 0}^J$ are faces of $\Gamma$ show that $\gamma+\mathbb{R}_{\geq 0}^{I\cup J}$ is a face of $\Gamma$ leaning on $\gamma$ such that $\sigma_{\gamma,I\cup J}$ is contained in $\mathbb{R}_{\geq 0}^{n_1}\times\mathbb{R}_{>0}^{n_2}$. \\
\indent As a consequence of the above claim, for each compact face $\gamma$ of $\Gamma$, there exists a unique maximal subset $M$ of $\{1,\dots,n_1\}$ such that $\gamma+\mathbb{R}_{\geq 0}^M$ is a face of $\Gamma$, which leans on $\gamma$, and $\sigma_{\gamma,M}$ is contained in $\mathbb{R}_{\geq 0}^{n_1}\times\mathbb{R}_{>0}^{n_2}$. The nonemptyness of the set $M$ follows from the fact that $supp(g)$ lies on the hyperplane $H$. 
\end{proof}
\begin{proposition}\label{prop4.5}
Assume that $g(x)=\sum_{\alpha\in H\cap\mathbb{N}_{>0}^n}a_{\alpha}x^{\alpha}$ is non-degenerate with respect to $\Gamma$. Then $\int_{\mathbb{A}_{\kappa}^{d_1}}i_1^*\mathcal{S}_g$ vanishes in $\mathscr{M}_{\mathbb{G}}^{\mathbb{G}}$.
\end{proposition}
\begin{proof}
Let $\gamma$ be a compact face of $\Gamma$. By Lemma \ref{lem4.4}, the set $\mathfrak{M}_{\gamma}$ has a unique element and this element is nonempty. Assume $\mathfrak{M}_{\gamma}=\{M\}$ with $|M|\geq 1$. Note that $A_{\gamma}=\int_{\mathbb{A}_{\kappa}^{d_1}}i_1^*\Phi_{\gamma,I}$ and $B_{\gamma}=\int_{\mathbb{A}_{\kappa}^{d_1}}i_1^*\Psi_{\gamma,I}$ depend only on $\gamma$, not on $I$ contained in $M$. Because of the fact that, if $m\geq 1$, $\sum_{j=0}^m(-1)^j\big(_j^m\big)=0$, one deduces that
$$\sum_{I\subset M}(-1)^{|I|}(A_{\gamma}-B_{\gamma})=0.$$
A hypothesis on $g$, namely $\alpha \in H\cap\mathbb{N}_{>0}^n$, means that no vertex of the Newton polyhedron $\Gamma$ of $g$ lies in a coordinate plane. By Proposition \ref{prop4.2}, the image $\int_{\mathbb{A}_{\kappa}^{d_1}}i_1^*\mathcal{S}_g$ of $\mathcal{S}_g$ vanishes in $\mathscr{M}_{\mathbb{G}}^{\mathbb{G}}$.
\end{proof}


\section{Kontsevich-Soibelman's conjecture}\label{sec5}
In this section, we will show that, under certain assumptions, Conjecture \ref{conj} is true.
\subsection{Composition with a polynomial in two variables}
We consider the conjecture of Kontsevich and Soibelman (Conjecture \ref{conj}) in the case where $F$ has the form $F(x,y,z)=f(g_1(x,y),g_2(z))$, where $f$ is a polynomial in two variables with $f(0,y)$ non-zero of positive degree, $g_1$ is a function on $\mathbb{A}_{\kappa}^{d_1}\times_{\kappa}\mathbb{A}_{\kappa}^{d_2}$ such that $g_1(tx,t^{-1}y)=g_1(x,y)$, $g_1(0,0)=0$, and $g_2$ is a regular function on $\mathbb{A}_{\kappa}^{d_3}$. 
Denote $\bold{g}=g_1\times g_2$ and $X_0(\bold{g})=\{(x,y,z)\ |\ g_1(x,y)=g_2(z)=0\}$. In particular, $X_0(\bold{g})$ contains $\mathbb{A}_k^{d_1}\times\{0\}$. We denote by $i_1$ the inclusion of $\mathbb{A}_{\kappa}^{d_1}\times_{\kappa}\mathbb{G}$ into $X_0(f\circ \bold{g})\times_{\kappa}\mathbb{G}$. Recall that, in this case, $h(z)=f(0,g_2(z))$.
\begin{theorem}\label{thm5.1}
Assume that $f$ is a polynomial in two variables with $f(0,y)$ non-zero of positive degree. Let $g_1$ be a regular function on $\mathbb{A}_{\kappa}^{d_1}\times_{\kappa}\mathbb{A}_{\kappa}^{d_2}$ non-degenerate with respect to its Newton polyhedron $\Gamma_{g_1}$ such that $g_1(0,0)=0$, no vertex of  $\Gamma_{g_1}$ lies in a coordinate plane, and $g_1(tx,t^{-1}y)=g_1(x,y)$ for every $t$ in $\mathbb{G}$. Let $g_2$ be a regular function on $\mathbb{A}_{\kappa}^{d_3}$. Then, the following formula
$$\int_{\mathbb{A}_{\kappa}^{d_1}}i_1^*\mathcal{S}_{f\circ \bold{g}}=\mathbb{L}^{d_1}\mathcal{S}_{h,0}$$
holds in $\mathscr{M}_{\mathbb{G}}^{\mathbb{G}}$. In other words, in this case, Conjecture \ref{conj} is true.
\end{theorem}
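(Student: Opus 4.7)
The approach rests on the Guibert--Loeser--Merle composition formula \cite{GLM1} for the motivic Milnor fiber of a composite function, combined with the vanishing machinery of Section 4 applied to $g_1$.

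First, the plan is to invoke the GLM composition formula to decompose $\mathcal{S}_{f\circ\mathbf{g}}$ according to the local structure of $\{f=0\}\subset\mathbb{A}_k^2$. Since $f(0,y)$ is a nonzero polynomial of positive degree in $y$, the line $\{u=0\}$ is not a component of $\{f=0\}$; instead it meets $\{f=0\}$ in the finite root set of $f(0,y)$. The GLM formula then produces two kinds of contributions to $\mathcal{S}_F$: a \emph{boundary} contribution along the $u$-axis $\{g_1=0\}$, and \emph{transverse} contributions along the branches of $\{f=0\}$ that cross the $u$-axis only at isolated points.

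Next, I would compute $\int_{\mathbb{A}_k^{d_1}}i_1^{*}$ of each contribution. The boundary contribution is controlled by the restriction of $F$ to $\{g_1=0\}$, which factors through $h(z)=f(0,g_2(z))$. Because $\mathbb{A}_k^{d_1}\times\{0\}\times\mathbb{A}_k^{d_3}\subset\{g_1=0\}$ (using $g_1(x,0)\equiv 0$, a direct consequence of $g_1(tx,t^{-1}y)=g_1(x,y)$ and $g_1(0,0)=0$), $F$ on this subvariety depends only on $z$ and equals $h(z)$. The pushforward along $\mathbb{A}_k^{d_1}$ then contributes a factor $[\mathbb{A}_k^{d_1}]=\mathbb{L}^{d_1}$, leaving precisely the motivic Milnor fiber $\mathcal{S}_{h,0}$; this supplies the right-hand side of the claimed identity.

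It then remains to show that every transverse contribution vanishes under $\int_{\mathbb{A}_k^{d_1}}i_1^{*}$. These contributions involve motivic classes built essentially from $g_1$. The weight condition $g_1(tx,t^{-1}y)=g_1(x,y)$ together with $g_1(0,0)=0$ forces every monomial $x^{\alpha}y^{\beta}$ of $g_1$ to satisfy $|\alpha|=|\beta|$, which is precisely the hypothesis of Corollary \ref{cor4.5} (with $n_1=d_1$, $n_2=d_2$). Combined with the nondegeneracy of $g_1$ and the assumption that no vertex of $\Gamma_{g_1}$ lies in a coordinate plane, Proposition \ref{prop4.2}, Lemma \ref{lem4.4}, and Corollary \ref{cor4.5} supply the required vanishing. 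The main technical obstacle is that the GLM formula produces not bare copies of $i_1^{*}\mathcal{S}_{g_1}$, but convolution-type expressions coupling $g_1$ with $g_2$ and data from the branches of $f$; one must verify that the alternating-sign cancellation underlying Corollary \ref{cor4.5}, namely $\sum_{j=0}^{|M|}(-1)^{j}\binom{|M|}{j}=0$ applied to the unique maximal $M\in\mathfrak{M}_{\gamma}$ furnished by Lemma \ref{lem4.4}, survives the twisting by these extra factors. This will require tracking the Newton polyhedron contribution of $g_1$ inside the GLM expression and is where the bulk of the computation is expected to lie.
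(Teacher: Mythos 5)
Your overall strategy coincides with the paper's: invoke the Guibert--Loeser--Merle composition formula for $f\circ\mathbf{g}$ (note: this is the one in \cite{GLM3}, not \cite{GLM1}), identify the ``boundary'' term $\mathcal{S}_{g_2^{m_0}}([X_0(g_1)])$ as the source of $\mathbb{L}^{d_1}\mathcal{S}_{h,0}$, and kill all the remaining terms $\Psi_{Q_v}(A_v)$ using the vanishing from Corollary~\ref{cor4.5}. Your analysis of the boundary term (the exterior-product factorization $[g_1^{-1}(0)]\boxtimes\mathcal{S}_{g_2^{m_0}}$, the pullback $i_1^*[g_1^{-1}(0)]=\mathbb{L}^{d_1}$, and $\mathcal{S}_{g_2^{m_0},0}=\mathcal{S}_{h,0}$) matches the paper. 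Your observation that the quasi-homogeneity of $g_1$ delivers exactly the hypotheses of Corollary~\ref{cor4.5}, so $\int_{\mathbb{A}_k^{d_1}}i_1^*\mathcal{S}_{g_1}=0$, is also correct.

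The gap is in the last paragraph, and it is a real one: you flag the vanishing of $\int_{\mathbb{A}_k^{d_1}}i_1^*\Psi_{Q_v}(A_v)$ as requiring a fresh tracking of the Newton-polyhedron cancellation ``inside'' the convolution, and estimate that the bulk of the work lies there. That misreads the structure, and as written it leaves the proof unfinished. No re-derivation of the alternating-sign cancellation is needed. The paper closes the argument in two short formal steps. For the first (extended) rupture vertex $v_0$, one has $A_{v_0}=\mathcal{S}'_{g_2}\boxtimes\mathcal{S}_{g_1}$, an external product in which the $g_1$-data is a clean tensor factor; hence $\int_{\mathbb{A}_k^{d_1}}i_1^*A_{v_0}=0$ follows immediately from $\int_{\mathbb{A}_k^{d_1}}i_1^*\mathcal{S}_{g_1}=0$, with no interaction with $g_2$ or the branch data of $f$. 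One then needs only the compatibility that the convolution operator $\Psi_Q$ commutes with $\int_{\mathbb{A}_k^{d_1}}i_1^*$ (the commuting diagram in Lemma~\ref{lem5.2}), which gives $\int_{\mathbb{A}_k^{d_1}}i_1^*\Psi_{Q}(A_{v_0})=0$. The second missing ingredient, which you do not mention at all, is the inductive relation $A_v=m\,A_{a(v)}$ (Lemma~\ref{lem5.3}, where $a(v)$ is the predecessor rupture vertex and $m$ is the number of roots of $Q_v(X,1)$); this propagates the vanishing at $v_0$ to every $v$ in the augmented tree. With those two lemmas in hand the argument closes formally and there is no further computation; without them your proposal does not actually show the transverse contributions vanish.
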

\begin{proof}
In \cite{GLM3}, Guibert, Loeser and Merle consider the motivic Milnor fiber of a composition of the form $f(g_1,g_2)$ where $g_1$ and $g_2$ have no variable in common and $f$ is a polynomial in $\kappa[x,y]$ such that $f(0,y)$ is non-zero of positive degree. To describe it, they used the generalized convolution operators $\Psi_Q$ defined in \cite{GLM2} and the tree of contact $\tau(f,0)$ constructed in terms of Puiseux expansions by Guibert (\cite{G}), here $0$ is the origin of $\mathbb{A}_{\kappa}^{d}$ with $d=d_1+d_2+d_3$. To any rupture vertex $v$ of $\tau(f,0)$ one attaches a weighted homogeneous polynomial $Q_{v}$ in $\kappa[X,Y]$. The virtual objects $A_v$ are defined inductively in terms of the tree of contact $\tau(f,0)$ and $A_{v_0}$, where $v_0$ is the first (extended) rupture vertex of the tree and $A_{v_0}$ depends only on $g$. Let $i$ be the inclusion of $X_0(\bold{g})\times_{\kappa} \mathbb{G}$ into $X_0(f\circ \bold{g})\times_{\kappa} \mathbb{G}$. Let $m_0$ be the order of $0$ as a root of $f(0,y)$. By the main theorem of \cite{GLM3}, the following formula
$$i^*\mathcal{S}_{f\circ \bold{g}}=\mathcal{S}_{g_2^{m_0}}([X_0(g_1)])-\sum_v\Psi_{Q_v}(A_v)$$
holds in $\mathscr{M}_{X_0(\bold{g})\times_{\kappa} \mathbb{G}}^{\mathbb{G}}$, where $\Psi_{Q_v}$ denotes the convolution defined in \cite{GLM3} and the sum runs over the augmented set of rupture vertices of the tree $\tau(f,0)$. The $i_1$ in the theorem is the inclusion of $\mathbb{A}_{\kappa}^{d_1}\times_{\kappa}\mathbb{G}$ into $X_0(f\circ \bold{g})\times_{\kappa}\mathbb{G}$, but by abuse of notation, we also use $i_1$ for the inclusion $\mathbb{A}_{\kappa}^{d_1}\times_{\kappa}\mathbb{G}\hookrightarrow X_0(\bold{g})\times_{\kappa}\mathbb{G}$. Thus $i_1$ and $i\circ i_1$ are in fact the same thing. Take the operator $\int_{\mathbb{A}_{\kappa}^{d_1}}i_1^*$ for two sides of the previous formula, we have
$$\int_{\mathbb{A}_{\kappa}^{d_1}}i_1^*\mathcal{S}_{f\circ \bold{g}}=\int_{\mathbb{A}_{\kappa}^{d_1}}i_1^*\mathcal{S}_{g_2^{m_0}}([X_0(g_1)])-\sum_v\int_{\mathbb{A}_{\kappa}^{d_1}}i_1^*\Psi_{Q_v}(A_v).$$
\indent We claim that, with previous notations and hypotheses, the formula
$$\int_{\mathbb{A}_{\kappa}^{d_1}}i_1^*\mathcal{S}_{g_2^{m_0}}([X_0(g_1)])=\mathbb{L}^{d_1}\mathcal{S}_{h,0}$$
holds in $\mathscr{M}_{\mathbb{G}}^{\mathbb{G}}$. Indeed, as in \cite{GLM1}, proof of Theorem 5.18, one can check that 
$$i^*\mathcal{S}_{g_2^{m_0}}([X_0(g_1)])=[g_1^{-1}(0)]\boxtimes\mathcal{S}_{g_2^{m_0}}.$$
By the hypotheses on $g_1$ and the fact that $i_1(\mathbb{A}_{\kappa}^{d_1})\cap g_2^{-1}(0)=\{0\}$, we have $i_1^*[g_1^{-1}(0)]=[\mathbb{A}_{\kappa}^{d_1}]=\mathbb{L}^{d_1}$ and $i_1^*\mathcal{S}_{g_2^{m_0}}=i_0^*\mathcal{S}_{g_2^{m_0}}=\mathcal{S}_{g_2^{m_0},0}$. One deduces that 
$$i_1^*\mathcal{S}_{g_2^{m_0}}([X_0(g_1)])=i_1^*\Big([g_1^{-1}(0)]\boxtimes\mathcal{S}_{g_2^{m_0}}\Big)=\mathbb{L}^{d_1}\mathcal{S}_{g_2^{m_0},0}.$$
By definition of $h$ and $m_0$, $\mathcal{S}_{g_2^{m_0},0}=\mathcal{S}_{h,0}$, the claim then follows. So, in order to finish the proof of Theorem \ref{thm5.1}, it suffices to prove that $\int_{\mathbb{A}_{\kappa}^{d_1}}i_1^*\Psi_{Q_v}(A_v)=0$ for every (extended) rupture vertex $v$ of $\tau(f,0)$.\\
\indent Let $v_0$ be the first (extended) rupture vertex of the tree of contact $\tau(f,p)$. As in \cite{GLM3}, the virtual object $A_{v_0}$ in $\mathscr{M}_{X_0(\bold{g})\times_{\kappa}(\mathbb{A}_{\kappa}^1\times_{\kappa} \mathbb{G})}^{\mathbb{G}}$ is defined by $A_{v_0}:=\mathcal{S}'_{g_2}\boxtimes\mathcal{S}_{g_1}$, where $\mathcal{S}'_{g_2}$ is an element in $\mathscr{M}_{X_0(g_2)\times_{\kappa}\mathbb{A}_{\kappa}^1}^{\mathbb{G}}$ which is the ``disjoint sum'' of $\mathcal{S}_{g_2}$ in $\mathscr{M}_{X_0(g_2)\times_{\kappa} \mathbb{G}}^{\mathbb{G}}$ and $X_0(g_2)$ in $\mathscr{M}_{X_0(g_2)}$.
\begin{lemma}\label{lem5.2}
Assume that $g_1$ is a regular function on $\mathbb{A}_{\kappa}^{d_1}\times_{\kappa}\mathbb{A}_{\kappa}^{d_2}$ non-degenerate with respect to its Newton polyhedron $\Gamma_{g_1}$ such that $g_1(0,0)=0$, no vertex of  $\Gamma_{g_1}$ lies in a coordinate plane, and $g_1(tx,t^{-1}y)=g_1(x,y)$ for every $t$ in $\mathbb{G}$. Let $g_2$ be a regular function on $\mathbb{A}_{\kappa}^{d_3}$. Then $\int_{\mathbb{A}_{\kappa}^{d_1}}i_1^*\Psi_Q(A_{v_0})$ vanishes in $\mathscr{M}_{\mathbb{G}}^{\mathbb{G}}$ for every quasi-homogeneous polynomial $Q$.
\end{lemma}
\begin{proof}
The asumptions on $g_1$ mean that we can write $g_1$ in the form
$$g_1(x,y)=\sum_{(\alpha,\beta)\in H\cap\mathbb{N}_{>0}^{d_1+d_2}}a_{\alpha\beta}x_1^{\alpha_1}\cdots x_{d_1}^{\alpha_{d_1}}y_1^{\beta_1}\cdots y_{d_2}^{\beta_{d_2}},$$
where $H$ is given by $\alpha_1+\cdots+\alpha_{d_1}=\beta_1+\cdots+\beta_{d_2}$. By Proposition \ref{prop4.5}, $\int_{\mathbb{A}_{\kappa}^{d_1}}i_1^*\mathcal{S}_{g_1}$ vanishes in $\mathscr{M}_{\mathbb{G}}^{\mathbb{G}}$, hence $\int_{\mathbb{A}_{\kappa}^{d_1}}i_1^*A_{v_0}$ vanishes in $\mathscr{M}_{\mathbb{A}_{\kappa}^1\times_{\kappa} \mathbb{G}}^{\mathbb{G}}$. Notice that here the $i_1$ is once again abused to denote the natural inclusion $\mathbb{A}_{\kappa}^{d_1}\times_{\kappa}\mathbb{A}_{\kappa}^1\times_{\kappa}\mathbb{G}\hookrightarrow X_0(\bold{g})\times_{\kappa}\mathbb{A}_{\kappa}^1\times_{\kappa}\mathbb{G}$. Because the following diagram
$$
\begin{CD}
\mathscr{M}_{X_0(\bold{g})\times_{\kappa}\mathbb{A}_{\kappa}^1\times_{\kappa} \mathbb{G}}^{\mathbb{G}}@>\Psi_Q>>\mathscr{M}_{X_0(\bold{g})\times_{\kappa} \mathbb{G}}^{\mathbb{G}}\\
@V\int_{\mathbb{A}_{\kappa}^{d_1}}i_1^*VV @V\int_{\mathbb{A}_{\kappa}^{d_1}}i_1^*VV\\
\mathscr{M}_{\mathbb{A}_{\kappa}^1\times_{\kappa} \mathbb{G}}^{\mathbb{G}}@>\Psi_Q>>\mathscr{M}_{\mathbb{G}}^{\mathbb{G}}\\
\end{CD}
$$
 commutes, the lemma thus follows. 
\end{proof}
\indent Let $v$ be an abitrary rupture vertex of the tree of contact $\tau(f,0)$ and $a(v)$ the predecessor of $v$ in the augmented set of rupture vertices. Then the polynomial $Q_v$ is a factor of $Q_{a(v)}$. Suppose that $Q_v(X,1)$ has $m_v$ disjoint zeroes in $\mathbb{A}_{\kappa}^1$.
\begin{lemma}\label{lem5.3} 
The equality $A_v=m_vA_{a(v)}$ holds in $\mathscr{M}_{X_0(\bold{g})\times_{\kappa}\mathbb{A}_{\kappa}^1\times_{\kappa} \mathbb{G}}^{\mathbb{G}}$.
\end{lemma}
\begin{proof}
We first notice that $Q_v^{-1}(0)$ is a smooth subvariety in $\mathbb{G}\times_{\kappa} \mathbb{G}$, equivariant under a diagonal $\mathbb{G}$-action and that the second projection $pr_2$ of the product $\mathbb{A}_{\kappa}^1\times_{\kappa} \mathbb{G}$ induces a homogeneous fibration $Q_v^{-1}(0)\rightarrow \mathbb{G}$. We denote by $B_v$ the restriction of $A_{a(v)}$ above $Q_v^{-1}(0)$. Then, by \cite{GLM3}, the element $A_v$ in $\mathscr{M}_{X_0(\bold{g})\times_{\kappa}\mathbb{A}_{\kappa}^1\times_{\kappa} \mathbb{G}}^{\mathbb{G}}$ is defined as the external product of the class of $id:\mathbb{A}_{\kappa}^1\rightarrow\mathbb{A}_{\kappa}^1$ by the induced map $pr_2: B_v\rightarrow \mathbb{G}$, which is diagonally monomial when restricted to $X_0(\bold{g})\times_{\kappa} \mathbb{G}\times_{\kappa} \mathbb{G}$. \\
\indent Consider the fibration $pr_2: B_v\rightarrow \mathbb{G}$ defined by the composition of $B_v\rightarrow Q_v^{-1}(0)$ and $pr_2: Q_v^{-1}(0)\rightarrow \mathbb{G}$. Then each fiber of $pr_2: B_v\rightarrow \mathbb{G}$ is a disjoint union of $m_v$ copies of a fiber of $A_{a(v)}\rightarrow \mathbb{A}_{\kappa}^1\times_{\kappa} \mathbb{G}$ over one point $(a,b)$ in $\mathbb{A}_{\kappa}^1\times_{\kappa} \mathbb{G}$. It follows that $A_v=m_vA_{a(v)}$ in $\mathscr{M}_{X_0(\bold{g})\times_{\kappa}(\mathbb{A}_{\kappa}^1\times_{\kappa} \mathbb{G})}^{\mathbb{G}}$.
\end{proof}
\indent It follows from Lemma \ref{lem5.2} and Lemma \ref{lem5.3} that $\int_{\mathbb{A}_{\kappa}^{d_1}}i_1^*\Psi_{Q_v}(A_v)=0$ for every (extended) rupture vertex $v$ of $\tau(f,0)$. This completes the proof of Theorem \ref{thm5.1}.
\end{proof}
\begin{remark}
In the case $f(x,y)=x+y$, the result can also be obtained directly from the Motivic Thom-Sebastiani Theorem (cf. \cite{DL4,DL3}).
\end{remark}

\subsection{}
In the following proposition, we prove the conjecture of Kontsevich and Soibelman under some other conditions on $F=g$, namely assuming $F$ is non-degenerate with respect to its Newton polyhedron $\Gamma$ and no vertex of $\Gamma$ lies in a coordinate plane.
\begin{proposition}\label{prop5.4}
Let $g$ be a regular function on $\mathbb{A}_{\kappa}^{d_1}\times_{\kappa}\mathbb{A}_{\kappa}^{d_2}\times_{\kappa}\mathbb{A}_{\kappa}^{d_3}$ such that $g(0,0,z)=0$ for every $z$ in $\mathbb{A}_{\kappa}^{d_3}$, and $g(tx,t^{-1}y,z)=g(x,y,z)$ for every $t$ in $\mathbb{G}$ and $(x,y,z)$ in $\mathbb{A}_{\kappa}^{d_1}\times_{\kappa}\mathbb{A}_{\kappa}^{d_2}\times_{\kappa}\mathbb{A}_{\kappa}^{d_3}$. If $g$ is non-degenerate with respect to its Newton polyhedron $\Gamma$ and no vertex of $\Gamma$ lies in a coordinate plane, then $\int_{\mathbb{A}_{\kappa}^{d_1}}i_1^*\mathcal{S}_g$ vanishes in $\mathscr{M}_{\mathbb{G}}^{\mathbb{G}}$. In other words, Conjecture \ref{conj} is true in this case.
\end{proposition}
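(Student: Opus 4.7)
The plan is to mimic the proof of Corollary \ref{cor4.5}, reducing the statement to the combinatorial identity $\sum_{I\subset M}(-1)^{|I|}=0$ for $|M|\geq 1$. First I verify the hypotheses of Proposition \ref{prop4.2}, with $n=d_1+d_2+d_3$ and $n_1=d_1$. Nondegeneracy and the no-vertex-on-a-coordinate-plane hypothesis are assumed. For $\mathbb{A}_k^{d_1}\times_k\{0\}\subset X_0(g)$, expand $g=\sum a_{\alpha\beta\gamma}x^{\alpha}y^{\beta}z^{\gamma}$: the $\mathbb{G}_{m,k}$-invariance $g(tx,t^{-1}y,z)=g(x,y,z)$ forces $|\alpha|=|\beta|$ on every non-vanishing term, so $g(x,0,0)$ retains only monomials with $\alpha=\beta=0$ and no $z$-variable, and $g(0,0,0)=0$ then gives $g(x,0,0)\equiv 0$. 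The same analysis combined with the no-vertex hypothesis in fact forces $h(z)=g(0,0,z)$ to vanish identically: any vertex $\eta$ of the Newton polyhedron of $h$ would produce a vertex $(0,0,\eta)$ of $\Gamma$ lying on a coordinate plane. Consequently $\mathcal{S}_{h,0}=0$, and the asserted vanishing $\int_{\mathbb{A}_k^{d_1}}i_1^*\mathcal{S}_g=0$ is precisely Conjecture \ref{conj} in this situation.

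Second, I reprove the analog of Lemma \ref{lem4.4}: for every compact face $\gamma$ of $\Gamma$, $\mathfrak{M}_\gamma$ consists of a single element $M_\gamma$, and $M_\gamma$ is nonempty. Since $supp(g)$ lies in the hyperplane $H=\{|\alpha|=|\beta|\}$, every compact face of $\Gamma$ does as well. If $I,J\subset\{1,\ldots,d_1\}$ both make $\gamma+\mathbb{R}_{\geq 0}^I$ and $\gamma+\mathbb{R}_{\geq 0}^J$ faces of $\Gamma$, then $\gamma+\mathbb{R}_{\geq 0}^{I\cup J}$ lies in the half-space $\{|\alpha|\geq|\beta|\}$ and meets $H$ only along $\gamma$, so it cannot enter the interior of $\Gamma$; together with the two given faces this forces it to be a face of $\Gamma$ leant on $\gamma$, with cone $\sigma_{\gamma,I\cup J}\subset\mathbb{R}_{\geq 0}^{d_1}\times\mathbb{R}_{>0}^{d_2+d_3}$ by Lemma \ref{lem3.8}. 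The existence of a unique maximal $M_\gamma$ follows, and nonemptyness is established exactly as in Lemma \ref{lem4.4} using $supp(g)\subset H$.

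Third, I check that $\int_{\mathbb{A}_k^{d_1}}i_1^*[\mathbb{A}_k^{d_1}\times_{X_0(g)}\Phi_{\gamma,I}]$ and $\int_{\mathbb{A}_k^{d_1}}i_1^*[\mathbb{A}_k^{d_1}\times_{X_0(g)}\Psi_{\gamma,I}]$ are independent of $I\subset M_\gamma$. Unwinding the fiber product, for $g_{\gamma,I}(\xi)=(\hat\xi,g_\gamma(\xi))$ to land in $\mathbb{A}_k^{d_1}\times\{0\}\times_k\mathbb{G}_{m,k}$ we need $\hat\xi_i=0$ for $i>d_1$, which is automatic since $I\subset M_\gamma\subset\{1,\ldots,d_1\}$. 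The fiber product over $\mathbb{A}_k^{d_1}\times_k\mathbb{G}_{m,k}$ is therefore $\mathbb{G}_{m,k}^n$ via the structure map $\xi\mapsto((\hat\xi_1,\ldots,\hat\xi_{d_1}),g_\gamma(\xi))$; pushing forward along the first projection erases the $\mathbb{A}_k^{d_1}$-factor and yields the class of $g_\gamma:\mathbb{G}_{m,k}^n\to\mathbb{G}_{m,k}$, independent of $I$. Substituting into the formula from Proposition \ref{prop4.2}, each $\gamma$-contribution to $\int_{\mathbb{A}_k^{d_1}}i_1^*\mathcal{S}_g$ factors as $\bigl(\sum_{I\subset M_\gamma}(-1)^{|I|}\bigr)$ times a common $I$-independent class, and vanishes since $|M_\gamma|\geq 1$.

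The main obstacle is the analog of Lemma \ref{lem4.4} in the three-block setting, and in particular the closure-under-unions step: one must argue that the presence of the passive $z$-block does not disrupt the hyperplane-based placement of $\gamma+\mathbb{R}_{\geq 0}^{I\cup J}$ on the boundary of $\Gamma$. Once this is in hand, the rest of the proof is essentially a transcription of the argument for Corollary \ref{cor4.5}.
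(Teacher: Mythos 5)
Your proof is correct and follows essentially the same route as the paper: observe that the weight condition together with the no-vertex hypothesis forces $\operatorname{supp}(g)\subset\mathbb{N}_{>0}^d\cap H$ and $h\equiv 0$, adapt Lemma \ref{lem4.4} to get a unique nonempty $M_\gamma$ for each compact $\gamma$, note that $\int_{\mathbb{A}_k^{d_1}}i_1^*\Phi_{\gamma,I}$ and $\int_{\mathbb{A}_k^{d_1}}i_1^*\Psi_{\gamma,I}$ depend only on $\gamma$, and conclude via $\sum_{I\subset M_\gamma}(-1)^{|I|}=0$ exactly as in Corollary \ref{cor4.5}. The extra detail you provide in unwinding the fiber product for $\Phi_{\gamma,I}$, $\Psi_{\gamma,I}$ and in the closure-under-unions step is a faithful expansion of what the paper leaves as a reference to Lemma \ref{lem4.4} and Corollary \ref{cor4.5}.
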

\begin{proof}
Write the function $g$ in the following form
$$g(x,y,z)=\sum_{(a,b,c)\in H\cap\mathbb{N}^d_{>0}}g_{a,b,c}x^ay^bz^c,$$
where $d=d_1+d_2+d_3$ and $H$ is given by the equation $a_1+\cdots+a_{d_1}=b_1+\cdots+b_{d_2}$. By Proposition \ref{prop4.5}, $\int_{\mathbb{A}_{\kappa}^{d_1}}i_1^*\mathcal{S}_g$ vanishes in $\mathscr{M}_{\mathbb{G}}^{\mathbb{G}}$. Notice that, in this case, $h(z)=F(0,0,z)=g(0,0,z)=0$, hence $\mathcal{S}_{h,0}$ also vanishes in $\mathscr{M}_{\mathbb{G}}^{\mathbb{G}}$.
\end{proof}

\subsection{Functions of Steenbrink type}
We consider now the case $F(x,y,z)=g(x,y,z)+h(z)^{\ell}$, where $g$ is as in Proposition \ref{prop5.4}, $h(z)$ is regular on $\mathbb{A}_{\kappa}^{d_3}$ such that $h(0)=0$, and $\ell$ is a large enough natural number. By composition with the projection, we will view $h$ as a function on $\mathbb{A}_{\kappa}^d$.
\begin{theorem}\label{thm5.6}
Let $F(x,y,z)=g(x,y,z)+h(z)^{\ell}$, where $g$ is as in Proposition \ref{prop5.4}, $h(z)$ is regular on $\mathbb{A}_{\kappa}^{d_3}$ such that $h(0)=0$, $\ell$ is a natural number. There exists a positive real number $N$ such that if $\ell>N$, the following formula holds in $\mathscr{M}_{\mathbb{G}}^{\mathbb{G}}$ :
$$\int_{\mathbb{A}_{\kappa}^{d_1}}i_1^*\mathcal{S}_F=\mathbb{L}^{d_1}\mathcal{S}_{h^{\ell},0}.$$
\end{theorem}
\begin{proof}
Let us denote by $i$ and $j$ the inclusion of $(X_0(g)\cap X_0(h))\times_{\kappa} \mathbb{G}$ in $X_0(g)\times_{\kappa} \mathbb{G}$ and $X_0(F)\times_{\kappa} \mathbb{G}$, respectively. The existence of $N$ is shown by \cite{GLM1}, Theorem 5.7, and also by this theorem, for $\ell>N$, we have
$$j^*\mathcal{S}_F-i^*\mathcal{S}_g=\mathcal{S}_{h^{\ell}}([X_0(g)])-\Psi_{\Sigma}(\mathcal{S}_{h^{\ell}}(\mathcal{S}_g)),$$
where $\Psi_{\Sigma}$ is the convolution defined in \cite{GLM1}. Then we get
$$\int_{\mathbb{A}_{\kappa}^{d_1}}i_1^*\mathcal{S}_F-\int_{\mathbb{A}_{\kappa}^{d_1}}i_1^*\mathcal{S}_g=\int_{\mathbb{A}_{\kappa}^{d_1}}i_1^*\mathcal{S}_{h^{\ell}}([X_0(g)])-\int_{\mathbb{A}_{\kappa}^{d_1}}i_1^*\Psi_{\Sigma}(\mathcal{S}_{h^{\ell}}(\mathcal{S}_g)).$$
Now, by Proposition \ref{prop5.4}, $\int_{\mathbb{A}_{\kappa}^{d_1}}i_1^*\mathcal{S}_g=0$. An analogue to the proof of Lemma \ref{lem5.2} shows that $\int_{\mathbb{A}_{\kappa}^{d_1}}i_1^*\Psi_{\Sigma}(\mathcal{S}_{h^{\ell}}(\mathcal{S}_g))$ vanishes. One deduces that 
$$\int_{\mathbb{A}_{\kappa}^{d_1}}i_1^*\mathcal{S}_F=\int_{\mathbb{A}_{\kappa}^{d_1}}i_1^*\mathcal{S}_{h^{\ell}}([X_0(g)]).$$
\indent Define a function $g'$ on $\mathbb{A}_{\kappa}^{d_1}\times_{\kappa}\mathbb{A}_{\kappa}^{d_2}$ by setting $g'(x,y)=g(x,y,0)$. Then we have that $g'(0,0)=0$ and $g'(tx,t^{-1}y)=g'(x,y)$ for any $t$ in $\mathbb{G}$. Furthermore, we have an identity in $\mathscr{M}_{X_0(g)}$ as follows
\begin{align*}
[X_0(g)]=\ [X_0(g')]+[\{(x,y,z)\in\mathbb{A}_{\kappa}^{d_1+d_2}\times(\mathbb{A}_{\kappa}^{d_3}\setminus\{0\}) \ |\ g(x,y,z)=0\}].
\end{align*}
As in the proof of Theorem \ref{thm5.1}, since $h^{\ell}$ and $g'$ have no variable in common, we have
$$i_1^*\mathcal{S}_{h^{\ell}}([X_0(g')])=\mathbb{L}^{d_1}\mathcal{S}_{h^{\ell},0}$$ 
in $\mathscr{M}_{\mathbb{A}_{\kappa}^{d_1}\times\mathbb{G}}^{\mathbb{G}}$. It remains to notice that 
$$i_1^*\mathcal{S}_{h^{\ell}}([\{(x,y,z)\in\mathbb{A}_{\kappa}^{d_1+d_2}\times(\mathbb{A}_{\kappa}^{d_3}\setminus\{0\}) \ |\ g(x,y,z)=0\}])=0,$$
because the intersection 
$$i_1(\mathbb{A}_{\kappa}^{d_1})\cap \{(x,y,z)\in\mathbb{A}_{\kappa}^{d_1+d_2}\times(\mathbb{A}_{\kappa}^{d_3}\setminus\{0\}) \ |\ g(x,y,z)=0\}$$
is empty. Thus,
$$\int_{\mathbb{A}_{\kappa}^{d_1}}i_1^*\mathcal{S}_F=\mathbb{L}^{d_1}\mathcal{S}_{h^{\ell},0}$$
in $\mathscr{M}_{\mathbb{G}}^{\mathbb{G}}$. The theorem is proved.
\end{proof}



\begin{thebibliography}{9999}
\bibitem{DL1}
{J. Denef and F. Loeser}, {\it Motivic Igusa zeta functions}, {J. Algebraic Geom.} {\bf 7} (1998), 505-537.
\bibitem{DL2}
{J. Denef and F. Loeser}, {\it Germs of arcs on singular algebraic varieties and motivic integration}, {Invent. Math.} {\bf 135} (1999), no. 1, 201-232.
\bibitem{DL4}
{J. Denef and F. Loeser}, {\it Motivic exponential integrals and a motivic Thom-Sebastiani theorem}, {Duke Math. J.} {\bf 99} (1999), no. 2, 285-309.
\bibitem{DL3}
{J. Denef and F. Loeser}, {\it ``Geometry on arc spaces of algebraic varieties"} in European congress of Mathematics, {Vol. 1 (Barcelona, 2000)}, Progr. Math. {\bf 201}, Birkha$\ddot{\text{u}}$ser, Basel, 2001, 327-348.
\bibitem{G}
{G. Guibert}, {\it Espaces d'arcs et invariants d'Alexander}, {Comment. Math. Helv.} {\bf 77} (2002), 783-820.
\bibitem{GLM1}
{G. Guibert, F. Loeser, M. Merle}, {\it Iterated vanishing cycles, convolution, and a motivic analogue of the conjecture of Steenbrink}, {Duke Math. J.} {\bf 132} (2006), no. 3, 409-457.
\bibitem{GLM2}
{G. Guibert, F. Loeser, M. Merle}, {\it Nearby cycles and composition with a non-degenerate polynomial}, {International Mathematics Research Notices} {\bf 31} (2005), 1873-1888.
\bibitem{GLM3}
{G. Guibert, F. Loeser, M. Merle}, {\it Composition with a two variable function}, {Math. Res. Lett.} {\bf 16} No. 3 (2009), 439-448.
\bibitem{H}
{K. Hoornaert}, {\it Newton polyhedra and the poles of Igusa's local zeta function}, {Bull. Belg. Math. Soc.} {\bf 9} (2002), 589-606.
\bibitem{KS}
{M. Kontsevich, Y. Soibelman}, {\it Stability structures, motivic Donaldson-Thomas invariants and cluster transformations}, {arXiv: 0811.2435v1} {[math.AG]} 16 Nov 2008.
\bibitem{R}
{R. T. Rockafellar}, {\it Convex Analysis}, {Princeton University Press}, {Princeton}, {New Jersey}, 1970.
\bibitem{To}
{B. To$\ddot{\rm e}$n}, {\it Derived Hall algebras}, {arXiv: 0501343v4} {[math.QA]} 25 Apr 2006.

\end{thebibliography}
\end{document}